\definecolor{myblue}{RGB}{35,125,225}
\begin{document}

\author{Henry Adams}
\email{henry.adams@ufl.edu}

\author{Armando Albornoz}
\email{albornoz.a1@ufl.edu}

\author{Glenn Bruda}
\email{glenn.bruda@ufl.edu}

\author{Jianda Du}
\email{jianda.du@ufl.edu}

\author{Lodewyk Jansen van Rensburg}
\email{jansenva@ufl.edu}

\author{Alejandro Leon}
\email{leon.alejandro@ufl.edu}

\author{Saketh Narayanan}
\email{saketh.narayanan@ufl.edu}

\author{Connor Panish}
\email{wilsonc1@ufl.edu}

\author{Chris Rugenstein}
\email{crugenstein@ufl.edu}

\author{Martin Wall}
\email{martinwall@ufl.edu}

\title{
Gromov--Hausdorff distances between quotient metric spaces
}

\begin{abstract}
The Hausdorff distance measures how far apart two sets are in a common metric space.
By contrast, the Gromov--Hausdorff distance provides a notion of distance between two abstract metric spaces.
How do these distances behave for quotients of spaces under group actions?
Suppose a group $G$ acts by isometries on two metric spaces $X$ and $Y$.
In this article, we study how the Hausdorff and Gromov--Hausdorff distances between $X$ and $Y$ and their quotient spaces $X/G$ and $Y/G$ are related.
For the Hausdorff distance, we show that if $X$ and $Y$ are $G$-invariant subsets of a common metric space, then we have $d_\h(X,Y)=d_\h(X/G,Y/G)$.
However, the Gromov--Hausdorff distance does not preserve this relationship: we show how to make the ratio $\frac{d_\gh(X/G,Y/G)}{d_\gh(X,Y)}$ both arbitrarily large and arbitrarily small, even if $X$ is an arbitrarily dense $G$-invariant subset of $Y$.
\end{abstract}

\maketitle


\section{Introduction}
Metric spaces, generalizing the intuitive notion of Euclidean distance, allow us to quantify the separation or the distance between two elements in that space.
But a more general question arises: how can we extend this notion to quantify the dissimilarity or the distance between two \emph{subsets} of a metric space?
One answer is the Hausdorff distance, introduced in Hausdorff's 1914 book \emph{Grundzüge der Mengenlehre}~\cite{hausdorff1914grundzuge}.
The Hausdorff distance in fact generates a metric space structure on the set of all nonempty compact subsets of a metric space; thus the Hausdorff distance provides a natural generalization of a metric space's distance function.

However, the Hausdorff distance is only defined for two subsets living in a common metric space.
To address the challenge of measuring the separation between \emph{arbitrary} metric spaces, we turn to the more complicated Gromov--Hausdorff distance; see~\cite{edwards1975structure,gromov1981groups, gromov1981structures,tuzhilin2016invented,berger2000encounter}.
By minimizing the Hausdorff distance across all distance-preserving mappings into a common space, the Gromov--Hausdorff distance provides a way to measure the separation between two metric spaces.
Moreover, the Gromov--Hausdorff distance turns the set of all isometry classes of compact metric spaces into a metric space.
The Gromov--Hausdorff distance is a natural distance in the area of object matching under invariances~\cite{ms04, ms05}, and the motivation for many other distances in the applied literature.

In this paper, we consider metric spaces equipped with group actions, and we explore how the Hausdorff and Gromov--Hausdorff distances behave in this context.
Indeed, let $M$ be a metric space, and let $G$ be a group that acts on $M$ by isometries.
If the action of $G$ on $M$ is \emph{proper}, which is always the case if the group $G$ is finite, then we can equip the quotient space $M/G$ with an induced metric.
If $X$ and $Y$ are $G$-invariant subsets of $M$, then how does the Hausdorff distance between $X$ and $Y$ compare to the Hausdorff distance between $X/G$ and $Y/G$?
How does the Gromov--Hausdorff distance between $X$ and $Y$ compare to the Gromov--Hausdorff distance between $X/G$ and $Y/G$?
We provide results showing how these distances are related.

Our main results are as follows.
For the Hausdorff distance, we show in Theorem~\ref{thm:Hausdorff} that if $X$ and $Y$ are $G$-invariant subsets of a common metric space, then we have $d_\h(X,Y)=d_\h(X/G,Y/G)$.
However, the Gromov--Hausdorff distance does not preserve this relationship: we show in Theorem~\ref{thm:gh-ratio} that the ratio $\frac{d_\gh(X/G,M/G)}{d_\gh(X,M)}$ can be made both arbitrarily large and arbitrarily small, even when $X$ is a subset of a finite metric space $M$.
Can one do better if $X$ is an arbitrarily dense subset of $M$, i.e.\ if we make the Hausdorff distance $d_\h(X,M)$ arbitrarily small?
We show the answer is no.
Indeed, in Theorem~\ref{thm:gh-ratio-dense} we show that for any positive integer $n$, there is a compact metric space $M$ with a proper isometric $G$ action such that for any $\delta>0$, there is a $G$-invariant subset $X\subseteq M$ with $d_\h(X,Z)<\delta$ satisfying $\frac{d_\gh(X/G,M/G)}{d_\gh(X,M)}\ge n$.

In addition, we leave behind the setting of group actions in order to address a question from~\cite{HvsGH}.
Theorem~1(a) of~\cite{HvsGH} states that if $X$ is a sufficiently dense sample from a compact manifold $M$, then $d_\gh(X,M)$ is bounded from below by a reasonable constant times $d_\h(X,M)$.
In Theorem~\ref{thm:manifold-assumtion-necessary} we show that for any $\varepsilon>0$, there is a compact metric space $Z$ such that for any $\delta>0$, there is a subset $X\subseteq Z$ with $d_\h(X,Z)<\delta$ with $d_\gh(X,Z) < \varepsilon \cdot d_\h(X,Z)$.
Our result therefore shows that the manifold assumption in~\cite[Theorem~1(a)]{HvsGH} is necessary.

We present our results in increasing levels of complexity, sometimes starting with a weaker statement with a simpler proof before progressing to a stronger statement with a more complicated proof.

As partial motivation for this paper, suppose that one is given a dataset with symmetries or approximate symmetries.
For example, the dataset may be a laser scan of the entrance to a museum.
Certain portions of the museum entrance may exhibit symmetries, such as the four columns to the left of the entrance being an approximate reflection of the four columns to the right of the entrance.
Similarly, the octagonal windows above the entrance may exhibit eight-fold rotational symmetry.
Portions of the laser scan may be incomplete, for example obscured by a tree branch, and other portions of the laser scan may be less detailed than in other areas.
Can one identify symmetries or approximate symmetries in this dataset, and then use that structure in order to improve the detail or to inpaint missing regions?
For example, one could use portions of an unobscured column in order to inpaint an obscured column.
Similarly, one could rotate one pane of an octagonal window around to add more detail to the scans of other panes.
One might identify that one portion of the dataset is similar to another portion, such as one column being similar to another, using notions of distance like the Hausdorff or Gromov--Hausdorff distance.
The goal of our paper is not to tackle this application directly (see~\cite{mgp_approx_symm_sig_06,graczyk2022model,huang2024approximately,korman2015probably} for related work on this topic).
Instead, our goal is to first develop the mathematical theory for Hausdorff and Gromov--Hausdorff distances between spaces and subsets thereof that are equipped with group actions.
We hope that expansions upon our work may later find use in applications similar to the ones described above.

We begin in Section~\ref{sec:related} by surveying related work.
In Section~\ref{sec:background} we introduce the necessary background definitions and notation, including concepts such as metric spaces, the Hausdorff distance, the Gromov--Hausdorff distance, and group actions.
In Section~\ref{sec:results-H} we show how the Hausdorff distance is well-behaved with respect to group quotients, whereas in Section~\ref{sec:results-GH} we show that the Gromov--Hausdorff distance is not.
Furthermore, in Section~\ref{sec:results-density} we show that adding the assumption that one metric space is an arbitrarily dense subset of the other metric space does not save these Gromov--Hausdorff distance results.
In Section~\ref{sec:addressing-a-question}, we answer an open question from~\cite{HvsGH}.
In Section~\ref{sec:conclusion} we conclude by organizing a list of open questions, as we hope that one of the contributions of this article is to provide a platform for future research on these topics.

\section{Related work}
\label{sec:related}

We refer the reader to~\cite{edwards1975structure,gromov1981groups, gromov1981structures,tuzhilin2016invented,berger2000encounter} for the history of the Gromov--Hausdorff distance, beginning with a 1975 paper by Edwards.
The first definition of the Gromov--Hausdorff distance between two metric spaces $X$ and $Y$ that one typically sees takes an infimum over all isometric embeddings of $X$ and $Y$ into some other metric space $Z$ (see Definition~\ref{def:GH}).
However, when working with Gromov--Hausdorff distances, one often instead uses an equivalent definition that infimizes over all correspondences between $X$ and $Y$ (see Theorem~\ref{thm:distortion_GH}).
It is known that if the input metric spaces are compact, then these infima are in fact realized; see for example~\cite{ivanov2016realizations}.

In applications to Riemannian geometry, one is often interested in the Gromov--Hausdorff limit of a sequence of Riemannian manifolds.
The Gromov--Hausdorff distance allows these spaces to be considered intrinsically.
Without additional assumptions, the Gromov--Hausdorff limit of a sequence of manifolds may not be well-behaved.
But properties of the limit can be proven in the presence of additional assumptions, say on the curvature or the volume, of the manifolds in the sequence; see for example~\cite{sormani2011intrinsic}.

It is difficult to compute, or even to approximate, the Gromov--Hausdorff distance between two metric spaces~\cite{agarwal2018computing,schmiedl2015shape,schmiedl2017computational}.
For this reason, researchers have considered relaxed variants thereof.
In~\cite{memoli2008euclidean}, M\'emoli considers the Gromov--Hausdorff distance $d_\gh$ and the Hausdorff distance $d_\h$ under the action of Euclidean isometries, motivating and defining the Euclidean--Hausdorff distance $d_{\mathrm{EH}}$ by modifying the expression for the Gromov--Hausdorff distance.
M\'emoli proves that $d_\gh(X,Y)\leq d_{\mathrm{EH}}(X,Y)\leq c_n\sqrt{\max\{\diam X,\diam Y\}\cdot d_\gh(X,Y)}$ for any compact $X,Y\subseteq \R^n$, where $c_n$ is constant depending on the dimension $n$.

In the paper~\cite{HvsGH}, the authors provide a lower bound $d_\gh(X,M)\geq 1/2\cdot d_\h(X,M)$ on the Gromov--Hausdorff distance between a closed Riemannian manifold $M$ and a sufficiently dense subset $X\subseteq M$.
For the special case $M=S^1$ with condition $d_\gh(X,S^1)<\pi/6$, they obtained $d_\gh(X,S^1) =  d_\h(X,S^1)$, proving that the Hausdorff and Gromov--Hausdorff distances coincide in this setting.
They also provide lower bounds on the Gromov--Hausdorff distance $d_\gh(X, Y)$ between two subsets $X$ and $Y$ of $M$.

See~\cite{dong2021gromov,khan2018gh} for papers relating Gromov--Hausdorff stability to group actions.

\section{Background and notation}
\label{sec:background}

We provide some background information and notation on metric spaces, the Hausdorff and Gromov--Hausdorff distances, correspondences, group actions, and quotient metric spaces.
We refer the reader to~\cite{bridson2011metric,BuragoBuragoIvanov} for more information.

\subsection*{Metric spaces}
Metric spaces encode distances between points in the space.

\begin{definition}[Metric space]
A \emph{metric space} is a set M equipped with a function $d\colon M\times M \rightarrow \R$ such that for all $x,y,z\in{M}$ the following are satisfied:
\begin{itemize}
    \item $d(x,x) = 0$ for all $x$, and $d(x,y) > 0$ for $x \neq y$.
    \item $d(x,y) = d(y, x)$.
    \item $d(x,z) \leq d(x,y) + d(y,z)$.
\end{itemize}
\end{definition}

\subsection*{The Hausdorff distance}

We define the Hausdorff distance using epsilon-thickenings, although there are other equivalent definitions.

\begin{definition}[Epsilon-thickenings]
Let $Z$ be a metric space and $X \subseteq Z$.
Then the \emph{$\varepsilon$-thickening of $X$}, denoted $X^\varepsilon$, is the set $X^\varepsilon \coloneqq \{z \in Z~\mid~d(z , x) < \varepsilon \text{ for some } x \in X \}$.
\end{definition}

\begin{definition}[Hausdorff distance]
If $X$ and $Y$ are two subsets of the same metric space $(Z,d)$, then the \emph{Hausdorff distance} between them is
\[d_\h(X,Y)\coloneqq \inf\{\varepsilon > 0 ~|~ X \subseteq Y^{\varepsilon} \text{ and } Y \subseteq X^{\varepsilon}\}\]
\end{definition}

An equivalent definition of the Hausdorff distance is the following:
\[d_\h(X,Y)=\max\left\{\sup_{x\in X}\inf_{y\in Y}d(x,y),~\sup_{y\in Y}\inf_{x\in X}d(x,y)\right\}.\]
The Hausdorff distance gives a metric structure on the compact subsets of a metric space.

\subsection*{The Gromov--Hausdorff distance}

Using the Hausdorff distance and isometric embeddings, we can now define the Gromov--Hausdorff distance.

\begin{definition}
\label{def:isometric-embedding}
A mapping between two metric spaces is an \textit{isometry} if it preserves distance.
In symbols, $f\colon X \rightarrow Y$ is an \emph{isometry} if $X$ and $Y$ are metric spaces and if $d_X(a,b) = d_Y(f(a), f(b))$ for all $a,b \in X$.
\end{definition}

Although some authors require an isometry to be a bijection, we do not require this.
However, notice that isometries are always injective.

\begin{definition}[Gromov--Hausdorff distance]
\label{def:GH}
If $X$ and $Y$ are two metric spaces, then the \emph{Gromov--Hausdorff} distance between them is
\[d_\gh(X,Y) \coloneqq \underset{\underset{f\colon X \rightarrow Z, g\colon Y \rightarrow Z}{\text{isometries}}}{\text{infimum}} \Big\{d^{Z}_\h(f(X),g(Y))\Big\}.\]
\end{definition}

So, over all possible isometries $f\colon X\to Z$ and $g\colon Y \to Z$ into a common metric space $Z$, the Gromov--Hausdorff distance $d_\gh$ is the infimal possible Hausdorff distance between the two embeddings $f(X)$ and $g(Y)$.
This infimum can be recast as an infimum over all metrics on $X\coprod Y$ that extend the metrics on $X$ and $Y$ --- hence as an infimum over a set instead of over a proper class~\cite[Remark~7.3.12]{BuragoBuragoIvanov}.

It is rarely easy to compute the exact Gromov--Hausdorff distance between two metric spaces.
It is also often difficult to even estimate the Gromov--Hausdorff distance; see for example~\cite{schmiedl2015shape,schmiedl2017computational}.
This is by nature of the number of isometries and metric spaces which must be considered.
In addition, the Gromov--Hausdorff distance can behave unintuitively at first, as we describe in the example below.

\begin{example}
Let $X$ be the set of 3 points that define the vertices of an equilateral triangle with side length $1$.
Let $Y$ be a single point; see Figure~\ref{fig:triangle-example}.
Then $X$ and $Y$ are realized in Euclidean space; however, their Gromov--Hausdorff distance is not.

\begin{figure}
\begin{center}
\scalebox{0.7}{
\begin{tikzpicture}
    \draw[densely dashed,gray] (0,0)--(8,0) node[]{};
    \draw[densely dashed,gray] (8,0)--(4,6.928) node[]{};
    \draw[densely dashed,gray] (4,6.928)--(0,0) node[]{};
    \draw[densely dashed,gray] (4,6.928)--(4,2.308) node[]{};
    \draw[densely dashed,gray] (0,0)--(4,2.308) node[]{};
    \draw[densely dashed,gray] (8,0)--(4,2.308) node[]{};

    \draw[black] (0,0)--(0,7.2) node[]{\tiny$\blacktriangle$};
    \draw[black] (0,0)--(8.3,0) node[]{\tiny$\mathbin{\rotatebox[origin=c]{-90}{$\blacktriangle$}}$};
        
    \draw[red] (8,0)--(8,0) node[draw, shape = circle, fill = red, minimum size = 0.2cm, inner sep=0pt]{};
    \draw[red] (0,0)--(0,0) node[draw, shape = circle, fill = red, minimum size = 0.2cm, inner sep=0pt]{};
    \draw[red] (4,6.928)--(4,6.928) node[draw, shape = circle, fill = red, minimum size = 0.2cm, inner sep=0pt]{};
    \draw[myblue] (4,2.308)--(4,2.308) node[draw, shape = circle, fill = blue, minimum size = 0.2cm, inner sep=0pt]{};

    \node[left] at (1.8,3.566) {\Huge$1$};
    \node[right] at (3.95,4.316) {\huge$\frac{1}{\sqrt{3}}$};
\end{tikzpicture}
}
\scalebox{0.7}{
\begin{tikzpicture}
    \draw[black] (0,0)--(3,3) node[]{};
    \draw[black] (6,0)--(3,3) node[]{};
    \draw[black] (3,3)--(3,7.24) node[]{};

    \draw[red] (0,0)--(0,0) node[draw, shape = circle, fill = red, minimum size = 0.2cm, inner sep=0pt]{};
    \draw[red] (3,7.24)--(3,7.24) node[draw, shape = circle, fill = red, minimum size = 0.2cm, inner sep=0pt]{};
    \draw[red] (6,0)--(6,0) node[draw, shape = circle, fill = red, minimum size = 0.2cm, inner sep=0pt]{};
    \draw[myblue] (3,3)--(3,3) node[draw, shape = circle, fill = blue, minimum size = 0.2cm, inner sep=0pt]{};

    \node[left] at (1.4,2.2) {\Huge$\frac{1}{2}$};
    \node[right] at (4.5,2.2) {\Huge$\frac{1}{2}$};
    \node[right] at (3.2,5.12) {\Huge$\frac{1}{2}$};
\end{tikzpicture}
}

\caption{The sets $X$ (in red) and $Y$ (in blue) are shown embedded in the plane on the left.
On the right, they are embedded into a graph $G$.}

\label{fig:triangle-example}
\end{center}
\end{figure}
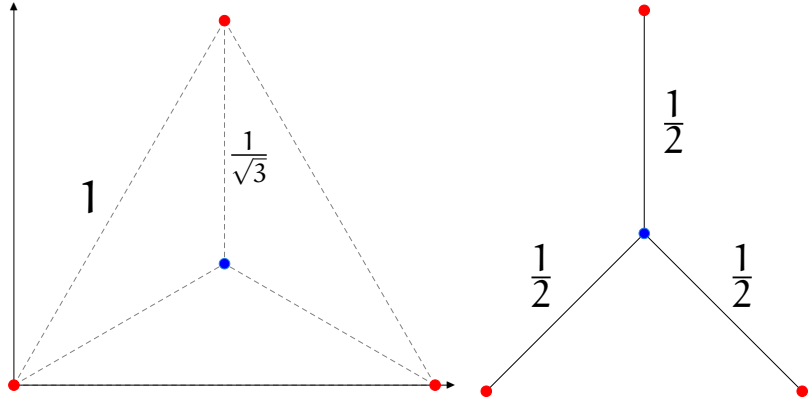

In the plane, the smallest achievable Hausdorff distance is $\frac{1}{\sqrt{3}}$.
This Hausdorff distance is attained by centering the blue point within the triangle defined by $X$, as shown in Figure~\ref{fig:triangle-example}.
But now consider embedding $X$ and $Y$ into the graph $G$, which is a the star graph with one internal (or central) node, 3 leaves, and three edges of length $\frac{1}{2}$.
We map the single blue point in $Y$ to the internal node and each point in $X$ to a different leaf.
We can check that this is, in fact, an isometry.
Observe that the Hausdorff distance between these embedded images of $X$ and $Y$ is $\frac{1}{2}$.
Therefore $d_\gh(X,Y)\le\frac{1}{2}$, and this cannot be achieved within Euclidean space.
\end{example}

\subsection*{Correspondences}

We now use correspondences to give an equivalent definition of the Gromov--Hausdorff distance, which is arguably more useful for computations.

\begin{definition}[Correspondence]

A \emph{correspondence} \textbf{C} between any two nonempty sets $X$ and $Y$ is defined to be a subset of $X \times Y$ with the following properties:

\begin{enumerate}[(i)]
\item For any $x \in X$, there exists a $y \in Y$ such that $(x,y) \in \textbf{C}$, and
\item For any $y \in Y$, there exists an $x \in X$ such that $(x,y) \in \textbf{C}.$
\end{enumerate}
\end{definition}

In other words, a correspondence is a relation that assigns to each element of $X$ and $Y$ at least one corresponding element of the opposite set.
It can be thought of as a ``double surjection''.
Note that from a surjective function $f \colon X \to Y$, one can obtain a correspondence $\textbf{C}$ defined as $\textbf{C}=\{(x,f(x))~|~x\in X\}$.

\begin{definition}[Additive Distortion]
If $\textbf{C}$ is a correspondence between two metric spaces $X$ and Y, then its \emph{distortion}, denoted $\dis(\mathbf{C}$), is defined to be the following:

\[\dis(\mathbf{C})\coloneqq\sup_{(x_1,y_1), (x_2,y_2) \in \textbf{C}} |d_X(x_1,x_2) - d_Y(y_1,y_2)|.\]
\label{def:AdditiveDistortion}
\end{definition}

This definition measures how much the correspondence $\textbf{C}$ \textit{distorts} or changes distances amongst corresponding points in $X$ and in $Y$.
A high distortion signifies that the correspondence relates points in $X$ at distance $\delta_X$ apart to points in $Y$ at distance $\delta_Y$ apart, where $\delta_X$ and $\delta_Y$ differ by a substantial margin.
Also notice that an isometry that is a surjection (and hence a bijection) determines a correspondence that has zero distortion since distance is preserved, meaning $d_X(x,x')=d_Y(f(x),f(x'))$.

\subsection*{An equivalent definition of the Gromov--Hausdorff distance}

We now give an equivalent definition of the Gromov--Hausdorff distance that is often convenient to work with:

\begin{theorem}[{{\cite{bridson2011metric,BuragoBuragoIvanov,kalton1999distances}}}]
\label{thm:distortion_GH}
Given metric spaces $X$ and $Y$, the Gromov--Hausdorff distance between them can be equivalently defined as
\begin{align*}
    d_\gh(X, Y) = \tfrac{1}{2}\cdot \inf_{R}(\dis(R)), 
\end{align*}
where the infimum is taken over all correspondences $R$ between $X$ and $Y$.
\end{theorem}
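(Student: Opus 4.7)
The plan is to prove the two inequalities separately, showing $d_\gh(X,Y) \leq \tfrac{1}{2}\inf_R \dis(R)$ and $d_\gh(X,Y) \geq \tfrac{1}{2}\inf_R \dis(R)$.

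For the first inequality, I would fix a correspondence $R$ with $\dis(R) = r$, and for any $\varepsilon > 0$ construct an admissible metric on the disjoint union $X \coprod Y$ that realizes Hausdorff distance at most $r/2 + \varepsilon$. The natural candidate is
\[
d(x,y) \coloneqq \inf\Bigl\{d_X(x,x') + \tfrac{r}{2} + \varepsilon + d_Y(y',y) \;\Bigm|\; (x',y') \in R\Bigr\}
\]
for $x \in X,\ y \in Y$, extended by $d_X$ and $d_Y$ on the pure $X$ and $Y$ pairs. The key verification is the triangle inequality, whose only nontrivial case is the mixed one: for $x_1,x_2 \in X$ and $y \in Y$, one must check $|d(x_1,y) - d(x_2,y)| \leq d_X(x_1,x_2)$ (direct from the definition as an infimum) and a similar three-point check going $x_1 \to y \to x_2$ through $Y$, which uses the distortion bound $|d_X(x'_1,x'_2) - d_Y(y'_1,y'_2)| \leq r$ to absorb the discrepancy. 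Then $d_\h^Z(X,Y) \leq r/2 + \varepsilon$ is immediate because each $x \in X$ is paired with some $y \in Y$ via $R$ (giving $d(x,y) \leq r/2 + \varepsilon$), and symmetrically. Letting $\varepsilon \to 0$ and infimizing over $R$ gives the bound.

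For the reverse inequality, I would fix isometric embeddings $f \colon X \to Z$ and $g \colon Y \to Z$ with $d_\h^Z(f(X), g(Y)) < \varepsilon$ for some $\varepsilon > d_\gh(X,Y)$, and construct the correspondence
\[
R \coloneqq \bigl\{(x,y) \in X \times Y \;\bigm|\; d_Z(f(x), g(y)) < \varepsilon\bigr\}.
\]
The Hausdorff condition ensures that each $x \in X$ has some partner $y \in Y$ with $d_Z(f(x), g(y)) < \varepsilon$, and symmetrically, so $R$ is a correspondence. For $(x_1, y_1), (x_2, y_2) \in R$, the triangle inequality in $Z$ combined with the isometry of $f$ and $g$ yields
\[
|d_X(x_1,x_2) - d_Y(y_1,y_2)| = |d_Z(f(x_1),f(x_2)) - d_Z(g(y_1),g(y_2))| \leq d_Z(f(x_1),g(y_1)) + d_Z(f(x_2),g(y_2)) < 2\varepsilon,
\]
so $\dis(R) \leq 2\varepsilon$. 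Taking $\varepsilon \downarrow d_\gh(X,Y)$ gives $\inf_R \dis(R) \leq 2 d_\gh(X,Y)$.

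The main obstacle will be the triangle inequality verification in the first direction, specifically showing that two "detours" through $R$ from $x_1$ to $y$ and then back to $x_2$ (via elements of $Y$) do not beat the direct distance $d_X(x_1,x_2)$ by more than is permitted; this is exactly where the distortion bound enters and why the additive $r/2$ constant is needed rather than something smaller. Once this is handled, both directions are assembled from routine manipulations, and the factor of $\tfrac{1}{2}$ emerges naturally from the symmetry between the two sides of the Hausdorff distance.
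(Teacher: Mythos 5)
The paper does not prove this statement; it is quoted as a known result with citations to the literature. Your argument is correct and is essentially the standard textbook proof found in those references (e.g.\ \cite[Theorem~7.3.25]{BuragoBuragoIvanov}): the gluing metric $d(x,y)=\inf\{d_X(x,x')+\tfrac{r}{2}+\varepsilon+d_Y(y',y)\mid (x',y')\in R\}$ for one direction, and the correspondence $R=\{(x,y)\mid d_Z(f(x),g(y))<\varepsilon\}$ with the two-sided triangle-inequality estimate for the other; both inequalities check out as you describe.
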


The distortion measures how well a correspondence between $X$ and $Y$ preserves distances.
If the Gromov--Hausdorff distance between two compact metric spaces $X$ and $Y$ is zero, with the infimum realized, then $X$ and $Y$ are isometric.
So taking the infimum over all possible correspondences measures how close $X$ and $Y$ are to being isometric.
If $X$ and $Y$ are compact, then this infimum is in fact realized~\cite{ivanov2016realizations}.

\subsection*{Initial bounds on the Gromov--Hausdorff distance}

For $X$ a metric space, we define the \emph{diameter} of $X$ to be $\diam(X)\coloneqq \sup_{x,x'\in X}d(x,x')$.

Let $X$ and $Y$ be metric spaces.
The diameters of $X$ and $Y$ place initial upper and lower bounds on the Gromov--Hausdorff distance between them.
Indeed, Example~6.30 of~\cite{tuzhilin2020lectures} explains how we have the upper bound $d_\gh(X,Y)\le\frac{1}{2}\max\{\diam(X),\diam(Y)\}$.
Also, if either $X$ or $Y$ have finite diameter, then Example~6.29 of~\cite{tuzhilin2020lectures} gives the lower bound $d_\gh(X,Y)\ge \frac{1}{2}|\diam(X)-\diam(Y)|$.
In particular, if $X$ is a singleton, then $d_\gh(X,Y)=\frac{1}{2}\diam(Y)$.
However, if $X$ and $Y$ have the same diameter, then it is more difficult to place nontrivial bounds on $d_\gh(X,Y)$.

\subsection*{Group actions}
For more information on group actions, see for example~\cite{dummit2004abstract}.

A \emph{group} is a set $G$ along with a binary operation $\cdot$ such that:
\begin{itemize}
\item the operation $\cdot$ is associative, meaning $(a\cdot b)\cdot c=a \cdot (b\cdot c) $ for all $a,b,c\in G$,
\item there exists an identity element $e\in G$ such that $a\cdot e=e\cdot a=a $ for all $a\in G$,
\item for each $a\in G$, there exists an inverse $a^{-1}\in G$ such that $a\cdot a^{-1}=a^{-1}\cdot a=e$.
\end{itemize}

The sets $\Z, \R, \mathbb{Q}$ form groups under addition, 
and the sets $\mathbb{Q}-\{0\}$, $\R-\{0\}$, and $\R^{+}$ form groups under multiplication.
For $n$ a positive integer $n$, the set $\Z/n=\{0,1,\ldots,n-1\}$ forms a group under the addition of integers modulo $n$.

\begin{definition} [Group Action]
Let $G$ be a group, let $S$ be a set, and let $G \times S \rightarrow S$ be a map, denoted $(g,x)\mapsto g\cdot x$.
We say $G$ \emph{acts} on the set $S$ when, for all $x \in S$:
\begin{itemize}
\item $e \cdot x = x$, where $e$ is the identity element of $G$,
\item $g \cdot (h \cdot x) = (gh) \cdot x$ for all $g,h\in G$.
\end{itemize}
\end{definition}

Here, $G \times S \rightarrow S$ is called a \emph{(left) group action} on $S$.
Note that every $g\in G$ acts as a permutation on $S$.
Some example group actions are as follows.
\begin{itemize}
\item The $\Z/n$ action on the vertices of a regular $n$-gon in the plane by rotations,
\item the $\Z/2$ action on Euclidean $n$-space by reflection about the origin, and
\item the action of a group $G$ on itself, either by left multiplication ($g\cdot x = gx$ for all $g,x\in G$) or by conjugation ($g\cdot x=gxg^{-1}$ for all $g,x\in G$).
\end{itemize}

\begin{definition}[$G$-invariance]
Let $G$ be a group acting on a set $X$, and let $Y$ be a subset of $X$.
Let $G \cdot Y$ denote the set $\{g \cdot y: g \in G, y \in Y\}$.
We say $Y$ is \emph{$G$-invariant} if $Y = G \cdot Y$.
\end{definition}

\subsection*{Quotient metric spaces}

We next consider group actions on metric spaces; we refer the reader to~\cite{bridson2011metric,BuragoBuragoIvanov} for more details.

\begin{definition}[Isometric action]
A group $G$ acting on a metric space $(X,d)$ is an \emph{isometric action} if for every $g\in G$, the map $X\to X$ defined by $x\mapsto g\cdot x$ is an isometry.
\end{definition}

We also say that $G$ acts \emph{isometrically} or \emph{by isometries}.

\begin{definition}[Proper action]
The action of $G$ on $X$ is \emph{proper} if, for each $x \in  X$, there exists some $r > 0$ such that the set $\{g \in G~|~(g \cdot B(x, r)) \cap B(x, r) \neq \emptyset\}$ is finite.
\end{definition}

In particular, an action by a finite group is necessarily proper.

\begin{definition}[Quotient metric]
Let $G$ be a group acting properly by isometries on a metric space $(M,d_M)$.
We obtain a metric space $(M/G, d_{M/G})$ by equipping the quotient $M/G$ with the \emph{quotient metric}, which is defined as
\[
    d_{M/G}([x], [x']) = \inf_{g \in G} d_M(x, g \cdot x').
\]
\end{definition}

Here $[x]\in M/G$ is notation for the orbit generated by $x\in M$.
For a proof that this indeed gives a metric on $M/G$, see for example~\cite[I.8.5]{bridson2011metric}.
For the remainder of this article, we focus on proper isometric actions, so that $M/G$ is a quotient metric space.

\section{The Hausdorff distance under quotients}
\label{sec:results-H}

In this paper, we explore questions involving Hausdorff and Gromov--Hausdorff distances when $M$ is a metric space, when $G$ is a group acting properly on $M$ by isometries, and when $X$ and $Y$ are $G$-invariant subsets of $M$.
In particular, we explore the Hausdorff and Gromov--Hausdorff distances between the spaces $M$, $X$, $Y$, $M/G$, $X/G$, and $Y/G$, presenting our results with increasing levels of complexity.

In this section, we show that the Hausdorff distance between $X$ and $Y$ is equal to the Hausdorff distance between the quotient spaces, namely $d_\h(X,Y)=d_\h(X/G,Y/G)$.
In other words, the Hausdorff distance is unaffected by the quotient by the group action.
Though this result may be expected since $G$ acts by isometries, we will see in Section~\ref{sec:results-GH} that changing from the Hausdorff distance to the Gromov--Hausdorff distance now allows the ratio $\frac{d_\gh(X/G,Y/G)}{d_\gh(X,Y)}$ to be made both arbitrarily large and arbitrarily small.

Before proving $d_\h(X,Y)=d_\h(X/G,Y/G)$, we first introduce a lemma establishing the relationship between $G$-invariance of subsets and their thickenings; see Figure~\ref{fig:G-inv-thickening}.

\begin{lemma}
\label{lem:G-balls}
Let $X$ be a $G$-invariant subset of a metric space $M$ on which $G$ acts properly and isometrically.
Then for any $\varepsilon > 0$, the $\varepsilon$-thickening $X^\varepsilon$ is also $G$-invariant.
\end{lemma}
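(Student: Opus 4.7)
The plan is to prove the set equality $X^\varepsilon = G \cdot X^\varepsilon$ by showing both inclusions, using only the definitions of $\varepsilon$-thickening, $G$-invariance, and isometric action.

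First I would establish the nontrivial inclusion $G \cdot X^\varepsilon \subseteq X^\varepsilon$. Pick an arbitrary $g \in G$ and $z \in X^\varepsilon$; the goal is $g \cdot z \in X^\varepsilon$. By definition of the thickening, there is some $x \in X$ with $d(z,x) < \varepsilon$. Because $G$ acts isometrically, the map $m \mapsto g \cdot m$ preserves distances, so $d(g \cdot z, g \cdot x) = d(z,x) < \varepsilon$. Since $X$ is $G$-invariant, $g \cdot x \in X$, and therefore $g \cdot z$ lies within distance $\varepsilon$ of a point of $X$, i.e., $g \cdot z \in X^\varepsilon$.

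The reverse inclusion $X^\varepsilon \subseteq G \cdot X^\varepsilon$ is immediate: any $z \in X^\varepsilon$ satisfies $z = e \cdot z \in G \cdot X^\varepsilon$, where $e$ is the identity element of $G$. Combining both inclusions gives $X^\varepsilon = G \cdot X^\varepsilon$, which is precisely the definition of $G$-invariance.

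There is no real obstacle here; the statement is essentially a direct consequence of the definition of an isometric action together with $G$-invariance of $X$. The only subtlety worth noting is that the argument never uses the properness of the action, so the lemma in fact holds for any isometric action on $M$. The properness assumption will only become relevant later, when one needs the quotient $M/G$ to be a metric space rather than just a set with a pseudometric.
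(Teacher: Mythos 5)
Your proof is correct and follows essentially the same route as the paper's: both inclusions are established the same way, using the isometric action to transport the distance bound and $G$-invariance of $X$ to keep the witness point inside $X$, with the reverse inclusion via the identity element. Your observation that properness is not actually needed for this lemma is accurate (the paper's proof does not use it either), but otherwise the arguments coincide.
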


\begin{proof}
We show that $G \cdot X^\varepsilon \coloneqq \{g \cdot x' ~:~ g \in G, x' \in X^\varepsilon\}$ is equal to $X^\varepsilon$.
Take $g \cdot x' \in G \cdot X^\varepsilon$.
By definition of $X^\varepsilon$, it must be that $d(x, x') < \varepsilon$ for some $x \in X$.
The group $G$ acts by isometries on $X$, so $d(x, x') = d(g \cdot x, g \cdot x') < \varepsilon$.
As $X$ is $G$-invariant, $g \cdot x \in X$, so $g \cdot x' \in X^\varepsilon$.
Thus we have $G \cdot X^\varepsilon \subseteq X^\varepsilon$.
Also, it is easy to see that $X^\varepsilon \subseteq G \cdot X^\varepsilon$, since for $\Bar{x} \in X^\varepsilon$, we have that $\Bar{x}=e \cdot \Bar{x} \in G \cdot X^\varepsilon$.
\end{proof}

\begin{figure}
\centering
\centering
\includegraphics[width=2.6in]{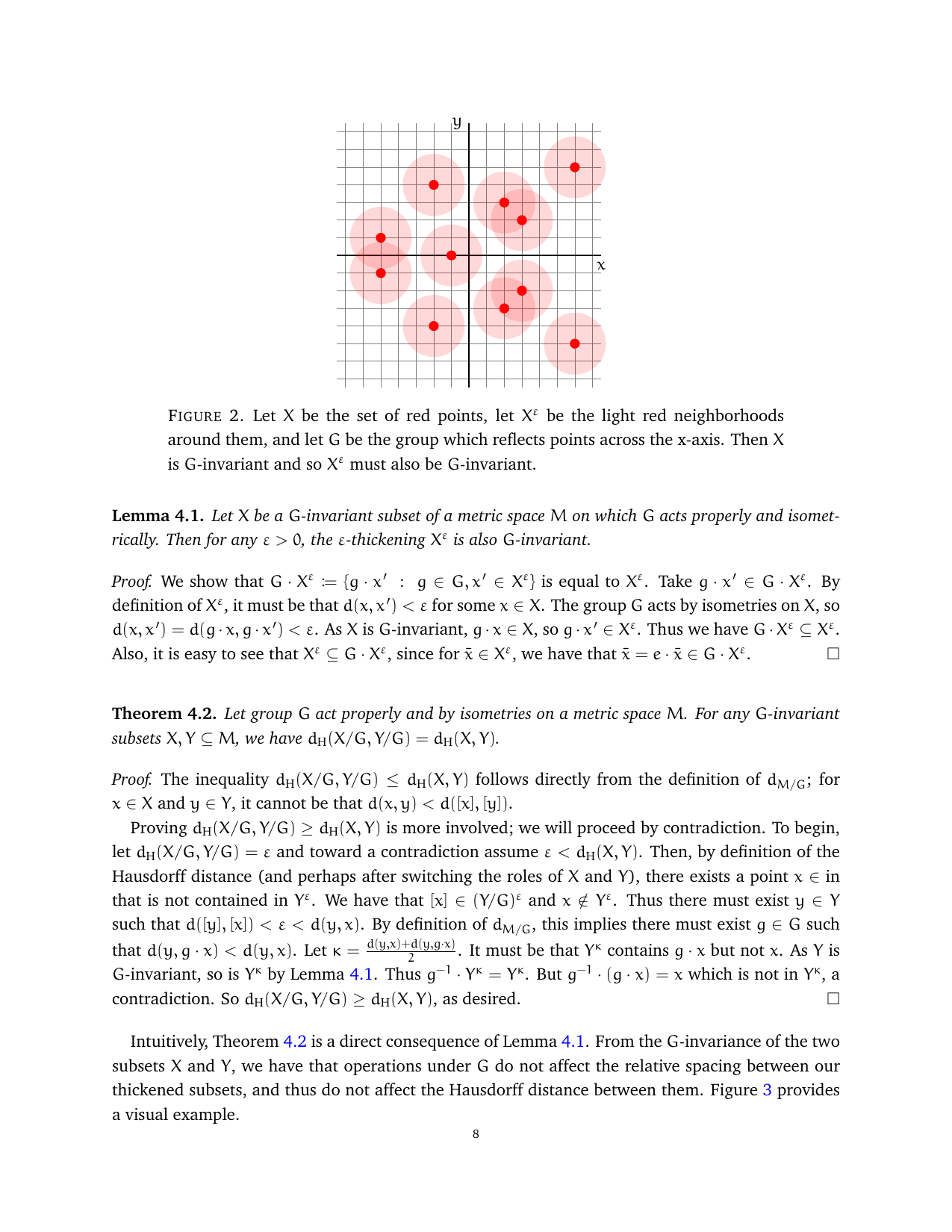}
\caption{Let $X$ be the set of red points, let $X^\varepsilon$ be the light red neighborhoods around them, and let $G$ be the group which reflects points across the x-axis.
Then $X$ is $G$-invariant and so $X^\varepsilon$ must also be $G$-invariant.}
\label{fig:G-inv-thickening}
\end{figure}

\begin{theorem}
\label{thm:Hausdorff}
Let group $G$ act properly and by isometries on a metric space $M$.
For any $G$-invariant subsets $X,Y\subseteq M$, we have $d_\h(X/G, Y/G) = d_\h(X, Y)$.
\end{theorem}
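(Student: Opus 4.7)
The plan is to prove the two inequalities $d_\h(X/G,Y/G)\le d_\h(X,Y)$ and $d_\h(X,Y)\le d_\h(X/G,Y/G)$ separately, with both directions reducing to the single observation that the quotient metric is defined as an infimum over the $G$-orbit. Concretely, I will work directly from the $\varepsilon$-thickening definition: it suffices to show that for every $\varepsilon>0$, the containments $X\subseteq Y^\varepsilon$ and $Y\subseteq X^\varepsilon$ in $M$ hold if and only if the analogous containments $X/G\subseteq (Y/G)^\varepsilon$ and $Y/G\subseteq (X/G)^\varepsilon$ hold in $M/G$.

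For the $\le$ direction, I would first note that the quotient projection $\pi\colon M\to M/G$ is $1$-Lipschitz, since
\[
d_{M/G}([x],[y]) \;=\; \inf_{g\in G} d_M(x,g\cdot y)\;\le\; d_M(x,y).
\]
Hence if $x\in X$ and $y\in Y$ with $d_M(x,y)<\varepsilon$, then $d_{M/G}([x],[y])<\varepsilon$, and this immediately gives $X/G\subseteq (Y/G)^\varepsilon$ from $X\subseteq Y^\varepsilon$, and symmetrically for the other containment. Taking the infimum over $\varepsilon$ yields $d_\h(X/G,Y/G)\le d_\h(X,Y)$.

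For the $\ge$ direction, I would use $G$-invariance of $Y$ crucially. Fix $\varepsilon>d_\h(X/G,Y/G)$ and pick any $x\in X$. Then $[x]\in X/G$ lies within distance $\varepsilon$ of some $[y]\in Y/G$, so by definition of the quotient metric there exists $g\in G$ with $d_M(x,g\cdot y)<\varepsilon$. Because $Y$ is $G$-invariant, $g\cdot y\in Y$, so $x\in Y^\varepsilon$. The same argument with $X$ and $Y$ swapped shows $Y\subseteq X^\varepsilon$, and hence $d_\h(X,Y)\le\varepsilon$. Letting $\varepsilon\downarrow d_\h(X/G,Y/G)$ concludes this direction. (One could alternatively invoke Lemma~\ref{lem:G-balls} here: since $Y$ is $G$-invariant, so is $Y^\varepsilon$, and the condition ``$[x]\in (Y/G)^\varepsilon$'' is equivalent to ``some representative of $[x]$ lies in $Y^\varepsilon$,'' which by $G$-invariance of $Y^\varepsilon$ is equivalent to ``every representative of $[x]$, including $x$ itself, lies in $Y^\varepsilon$''.)

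There is no real obstacle here; the content of the theorem is entirely that the infimum in the quotient metric can always be ``lifted'' back to $M$ using $G$-invariance of $Y$. The one subtlety to keep straight in the writeup is that the infimum in $d_{M/G}([x],[y])$ is generally not achieved, so one should argue with strict inequalities (picking $\varepsilon$ slightly larger than each quantity) rather than trying to realize the inf by a specific $g\in G$.
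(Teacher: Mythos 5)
Your proof is correct, and the $\le$ direction is identical to the paper's (the projection $M\to M/G$ is $1$-Lipschitz). For the $\ge$ direction, however, you take a genuinely more direct route. The paper argues by contradiction: it assumes $\varepsilon=d_\h(X/G,Y/G)<d_\h(X,Y)$, finds $x\in X\setminus Y^\varepsilon$ with $d([y],[x])<\varepsilon<d(y,x)$, introduces an intermediate radius $\kappa=\tfrac{1}{2}\bigl(d(y,x)+d(y,g\cdot x)\bigr)$ so that $Y^\kappa$ contains $g\cdot x$ but not $x$, and then invokes Lemma~\ref{lem:G-balls} ($G$-invariance of thickenings) to contradict $g^{-1}\cdot Y^\kappa=Y^\kappa$. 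You instead lift the infimum in the quotient metric directly: from $d_{M/G}([x],[y])<\varepsilon$ you extract a witness $g$ with $d_M(x,g\cdot y)<\varepsilon$ and use $G$-invariance of $Y$ itself to conclude $g\cdot y\in Y$, hence $x\in Y^\varepsilon$. This buys you two things: you bypass Lemma~\ref{lem:G-balls} entirely (your parenthetical correctly notes it as an optional alternative), and your insistence on strict inequalities $\varepsilon>d_\h(X/G,Y/G)$ followed by $\varepsilon\downarrow d_\h(X/G,Y/G)$ cleanly sidesteps a small gap in the paper's writeup, which asserts $[x]\in(Y/G)^\varepsilon$ at $\varepsilon=d_\h(X/G,Y/G)$ exactly, even though the infimum defining the Hausdorff distance need not be attained at that radius. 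The paper's route, in exchange, showcases Lemma~\ref{lem:G-balls}, which it motivates as the conceptual heart of the result; but as a self-contained proof yours is shorter and tighter.
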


\begin{proof}
The inequality $d_\h(X/G, Y/G) \leq d_\h(X, Y)$ follows directly from the definition of $d_{M/G}$; for $x \in X$ and $y \in Y$, it cannot be that $d(x, y) < d([x], [y])$.

Proving $d_\h(X/G, Y/G) \geq d_\h(X, Y)$ is more involved; we will proceed by contradiction.
To begin, let $d_\h(X/G, Y/G) = \varepsilon$ and toward a contradiction assume $\varepsilon < d_\h(X, Y)$.
Then, by definition of the Hausdorff distance (and perhaps after switching the roles of $X$ and $Y$), there exists a point $x\in $ in that is not contained in $Y^\varepsilon$.
We have that $[x] \in (Y/G)^\varepsilon$ and $x \notin Y^\varepsilon$.
Thus there must exist $y \in Y$ such that $d([y], [x]) < \varepsilon < d(y, x)$.
By definition of $d_{M/G}$, this implies there must exist $g \in G$ such that $d(y, g \cdot x) < d(y, x)$.
Let $\kappa = \frac{d(y, x)+d(y, g \cdot x)}{2}$.
It must be that $Y^\kappa$ contains $g \cdot x$ but not $x$.
As $Y$ is $G$-invariant, so is $Y^\kappa$ by Lemma~\ref{lem:G-balls}.
Thus $g^{-1} \cdot Y^\kappa = Y^\kappa$.
But $g^{-1} \cdot (g \cdot x) = x$ which is not in $Y^\kappa$, a contradiction.
So $d_\h(X/G, Y/G) \geq d_\h(X, Y)$, as desired.
\end{proof}

\begin{figure}[htb]
\centering
\includegraphics[width=4.5in]{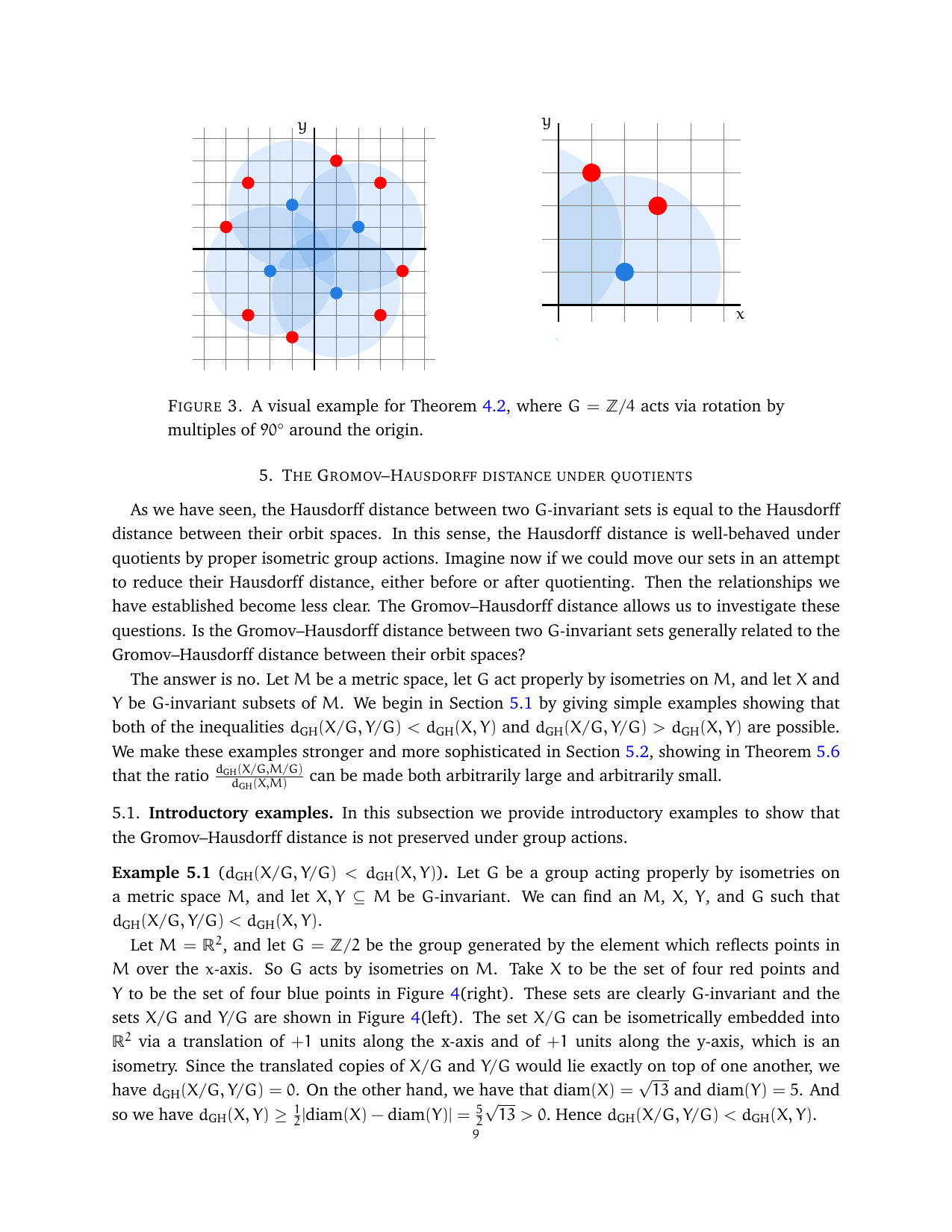}
\caption{A visual example for Theorem~\ref{thm:Hausdorff}, where $G=\Z/4$ acts via rotation by multiples of $90^\circ$ around the origin.}
\label{fig:G-Hausdorff}
\end{figure}

Intuitively, Theorem~\ref{thm:Hausdorff} is a direct consequence of Lemma~\ref{lem:G-balls}.
From the $G$-invariance of the two subsets $X$ and $Y$, we have that operations under $G$ do not affect the relative spacing between our thickened subsets, and thus do not affect the Hausdorff distance between them.
Figure~\ref{fig:G-Hausdorff} provides a visual example.

\section{The Gromov--Hausdorff distance under quotients}
\label{sec:results-GH}

As we have seen, the Hausdorff distance between two $G$-invariant sets is equal to the Hausdorff distance between their orbit spaces.
In this sense, the Hausdorff distance is well-behaved under quotients by proper isometric group actions.
Imagine now if we could move our sets in an attempt to reduce their Hausdorff distance, either before or after quotienting.
Then the relationships we have established become less clear.
The Gromov--Hausdorff distance allows us to investigate these questions.
Is the Gromov--Hausdorff distance between two $G$-invariant sets generally related to the Gromov--Hausdorff distance between their orbit spaces?

The answer is no.
Let $M$ be a metric space, let $G$ act properly by isometries on $M$, and let $X$ and $Y$ be $G$-invariant subsets of $M$.
We begin in Section~\ref{ssec:results-GH-intro} by giving simple examples showing that both of the inequalities $d_\gh(X/G,Y/G) < d_\gh(X,Y)$ and $d_\gh(X/G,Y/G) > d_\gh(X,Y)$ are possible.
We make these examples stronger and more sophisticated in Section~\ref{sec:results-GH-ratio}, showing in Theorem~\ref{thm:gh-ratio} that the ratio $\frac{d_\gh(X/G,M/G)}{d_\gh(X,M)}$ can be made both arbitrarily large and arbitrarily small.

\subsection{Introductory examples}
\label{ssec:results-GH-intro}

In this subsection we provide introductory examples to show that the Gromov--Hausdorff distance is not preserved under group actions.

\begin{example}[$d_\gh(X/G,Y/G) < d_\gh(X,Y)$]
\label{ex:<}
Let $G$ be a group acting properly by isometries on a metric space $M$, and let $X,Y\subseteq M$ be $G$-invariant.
We can find an $M$, $X$, $Y$, and $G$ such that $d_\gh(X/G,Y/G) < d_\gh(X,Y)$.

\begin{figure}
\centering
\begin{subfigure}
\centering
\begin{tikzpicture}[scale=0.7]
\foreach \x in {-1,...,5}
\draw[lightgray] (\x,-3.5) -- (\x,3.5);
\foreach \y in {-3,...,3}
\draw[lightgray] (-1.5,\y) -- (5.5,\y);
\draw[thick] (-1.5,0) -- (5.5,0) node[below] {$x$};
\draw[thick] (0,-3.5) -- (0,3.5) node[left] {$y$};
\foreach \point in {(1,1),(4,1),(1,-1), (4,-1)}
\fill[red] \point circle (6pt);
\foreach \point in {(2,2),(5,2),(2,-2), (5,-2)}
\fill[myblue] \point circle (6pt);
\end{tikzpicture}
\end{subfigure}
\hspace{0.3in}
\begin{subfigure}
\centering
\begin{tikzpicture}[scale=0.7]
\foreach \x in {-1,...,5}
\draw[lightgray] (\x,-3.5) -- (\x,3.5);
\foreach \y in {-3,...,3}
\draw[lightgray] (-1.5,\y) -- (5.5,\y);
\draw[thick] (-1.5,0) -- (5.5,0) node[below] {$x$};
\draw[thick] (0,-3.5) -- (0,3.5) node[left] {$y$};
\foreach \point in {(1,1),(4,1)}
\fill[red] \point circle (6pt);
\foreach \point in {(2,2),(5,2)}
\fill[myblue] \point circle (6pt);
\end{tikzpicture}
\end{subfigure}
\caption{(Left) The sets $X$ (in red) and $Y$ (in blue).
(Right) The sets $X/G$ (in red) and $Y/G$ (in blue).}
\label{fig:counter}
\end{figure}

Let $M = \R^2$, and let $G=\Z/2$ be the group generated by the element which reflects points in $M$ over the $x$-axis.
So $G$ acts by isometries on $M$.
Take $X$ to be the set of four red points and $Y$ to be the set of four blue points in Figure~\ref{fig:counter}(right).
These sets are clearly $G$-invariant and the sets $X/G$ and $Y/G$ are shown in Figure~\ref{fig:counter}(left).
The set $X/G$ can be isometrically embedded into $\R^2$ via a translation of $+1$ units along the x-axis and of $+1$ units along the y-axis, which is an isometry.
Since the translated copies of $X/G$ and $Y/G$ would lie exactly on top of one another, we have $d_\gh(X/G,Y/G) = 0$.
On the other hand, we have that $\diam(X) = \sqrt{13}$ and $\diam(Y) = 5$.
And so we have $d_\gh(X,Y) \ge \frac{1}{2}|\diam(X) - \diam(Y)| = \frac{5}{2}\sqrt{13} > 0.$
Hence $d_\gh(X/G,Y/G) < d_\gh(X,Y)$.
\end{example}

\begin{example}[$d_\gh(X/G,Y/G) > d_\gh(X,Y)$]
\label{ex:>}
Let $G$ be a group acting properly by isometries on a metric space $M$ and $X,Y\subseteq M$ be $G$-invariant.
We can find an $M$, $X$, $Y$, and $G$ such that $d_\gh(X/G,Y/G) > d_\gh(X,Y)$.

\begin{figure}[htb]
\centering
\begin{subfigure}
\centering
\begin{tikzpicture}[scale=.6]
\foreach \x in {-5,...,6}
\draw[lightgray] (\x,-4.5) -- (\x,4.5);
\foreach \y in {-4,...,4}
\draw[lightgray] (-5.5,\y) -- (6.5,\y);
\draw[thick] (-5.5,0) -- (6.5,0) node[below] {$x$};
\draw[thick] (0,-4.5) -- (0,4.5) node[left] {$y$};
\foreach \point in {(-4,3),(-2,3),(-4,1),(-2,1),(-4,-3),(-2,-3),(-4,-1),(-2,-1)}
\fill[red] \point circle (6pt);
\foreach \point in {(0,1),(0,-1),(2,1),(2,-1),(4,1),(4,-1),(6,1),(6,-1)}
\fill[myblue] \point circle (6pt);
\end{tikzpicture}
\end{subfigure}
\hspace{0.1in}
\begin{subfigure}
\centering
\begin{tikzpicture}[scale=.6]
\foreach \x in {-5,...,6}
\draw[lightgray] (\x,-4.5) -- (\x,4.5);
\foreach \y in {-4,...,4}
\draw[lightgray] (-5.5,\y) -- (6.5,\y);
\draw[thick] (-5.5,0) -- (6.5,0) node[below] {$x$};
\draw[thick] (0,-4.5) -- (0,4.5) node[left] {$y$};
\foreach \point in {(-4,3),(-2,3),(-4,1),(-2,1)}
\fill[red] \point circle (6pt);
\foreach \point in {(0,1),(2,1),(4,1),(6,1)}
\fill[myblue] \point circle (6pt);
\end{tikzpicture}
\end{subfigure}
\caption{(Left) The sets $X$ (in red) and $Y$ (in blue).
(right) The sets $X/G$ (in red) and $Y/G$ (in blue).}
\label{fig:counter2}
\end{figure}

Let $M = \R^2$, and let $G=\Z/2$ be a group generated by the element which reflects points in $M$ over the $x$-axis.
Then $G$ acts by isometries on $M$.
Take $X$ to be the set of eight red points and $Y$ to be the set of eight blue points in Figure~\ref{fig:counter2}(left).
These sets are clearly $G$-invariant and the sets $X/G$ and $Y/G$ are shown in Figure~\ref{fig:counter2}(right).
The set $X$ can be re-embedded into $M$ via a $90^{\circ}$ rotation about the center of $X$ (the point $(-3, 0)$) and then a translation of $+6$ units along the x-axis.
This map -- the composition of a rotation and a translation in the plane -- is clearly an isometry.
But then, the sets $X$ and $Y$ would lie exactly on top of one another.
Therefore, $d_\gh(X,Y) = 0$.
On the other hand, we have that $\diam(X/G) = 2\sqrt{2}$ and $\diam(Y/G) = 6$.
Therefore, $d_\gh(X/G,Y/G) \ge \frac{1}{2}|\diam(X/G) - \diam(Y/G)| = 3 - \sqrt{2}$.
And so it follows that $d_\gh(X,Y) < d_\gh(X/G,Y/G)$.

\end{example}

We see immediately that the Gromov--Hausdorff distance is not preserved under the same conditions that we used in Theorem~\ref{thm:Hausdorff}.
In fact, there is no multiplicative bound either, since in each of Examples~\ref{ex:<} and~\ref{ex:>}, one of the Gromov--Hausdorff distances is zero.

We will now place the additional restriction that $X\subseteq Y$, or in other words, that $Y=M$ with $X\subseteq M$.
We show that this subset relationship still does not control the relationship between $d_\gh(X,M)$ and $d_\gh(X/G,M/G)$, via the following examples.

\begin{example}[$d_\gh(X/G,M/G) < d_\gh(X,M)$]
\label{ex:GH-Hausdorff_a}
Let $G$ be a group acting properly by isometries on a metric space $M$, and let $X\subseteq M$ be $G$-invariant.
We can find an $M$, $X$, and $G$ such that $d_\gh(X/G,M/G) < d_\gh(X,M)$.

\begin{figure}[htb]
\centering
\includegraphics[width=4in]{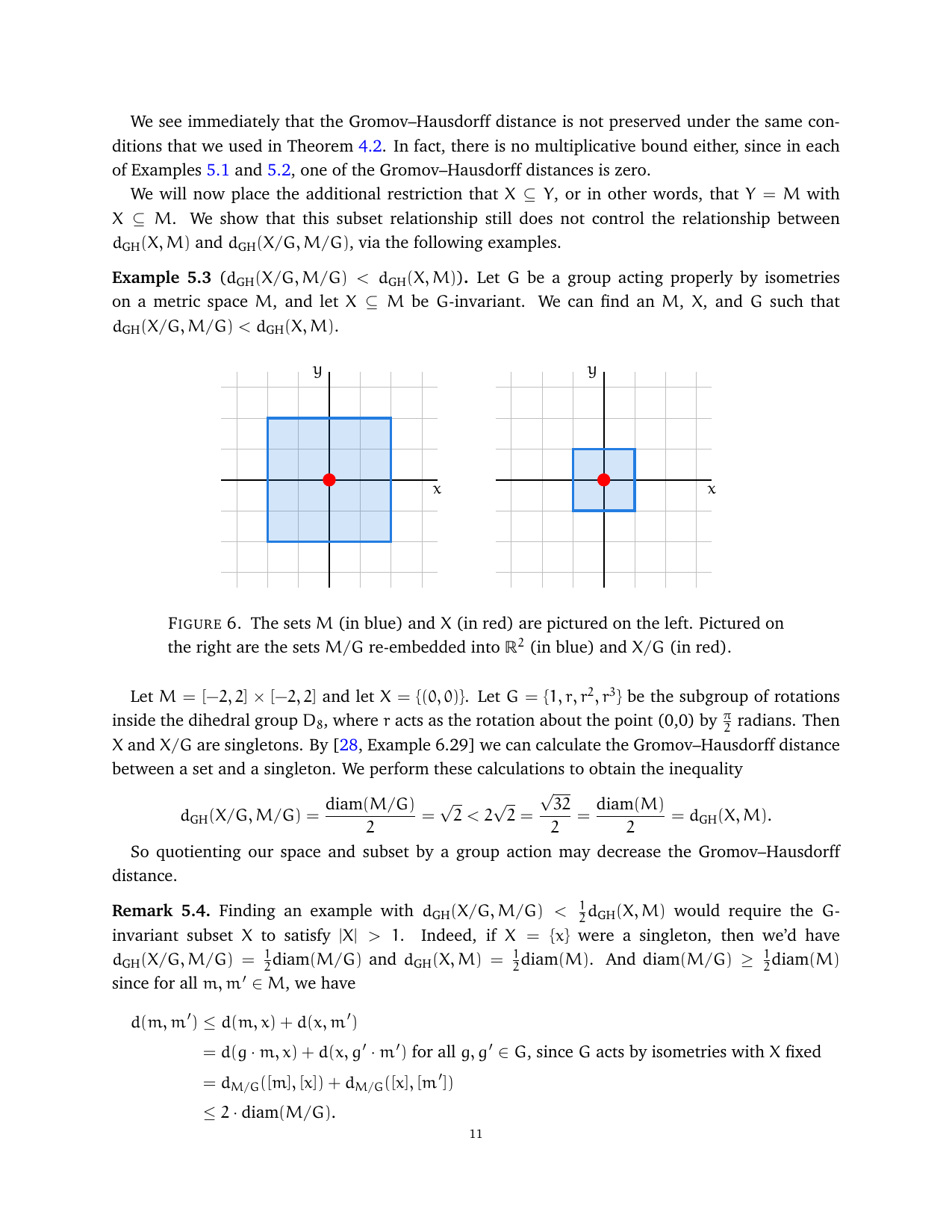}
\caption{The sets $M$ (in blue) and $X$ (in red) are pictured on the left.
Pictured on the right are the sets $M/G$ re-embedded into $\R^{2}$ (in blue) and $X/G$ (in red).}
\label{fig:q3counterEx}
\end{figure}

Let $M=[-2,2]\times [-2,2]$ and let $X=\{(0,0)\}$.
Let $G= \{1,r,r^2,r^3\}$ be the subgroup of rotations inside the dihedral group $D_8$, where $r$ acts as the rotation about the point (0,0) by $\frac{\pi}{2}$ radians.
Then $X$ and $X/G$ are singletons.
By~\cite[Example~6.29]{tuzhilin2020lectures} we can calculate the Gromov--Hausdorff distance between a set and a singleton.
We perform these calculations to obtain the inequality
\[d_\gh(X/G, M/G) = \frac{\diam(M/G)}{2}={\sqrt{2}} < 2\sqrt{2} = \frac{\sqrt{32}}{2} = \frac{\diam(M)}{2}=d_\gh(X,M).\]
\end{example}

So quotienting our space and subset by a group action may decrease the Gromov--Hausdorff distance.

\begin{remark}
Finding an example with $d_\gh(X/G,M/G)<\frac{1}{2}d_\gh(X,M)$ would require the $G$-invariant subset $X$ to satisfy $|X|>1$.
Indeed, if $X=\{x\}$ were a singleton, then we'd have $d_\gh(X/G,M/G)=\frac{1}{2}\diam(M/G)$ and $d_\gh(X,M)=\frac{1}{2}\diam(M)$.
And $\diam(M/G) \ge \frac{1}{2}\diam(M)$ since for all $m,m'\in M$, we have \begin{align*}
d(m,m') &\le d(m,x)+d(x,m') \\
&= d(g\cdot m,x)+d(x,g'\cdot m') \text{ for all }g,g'\in G\text{, since $G$ acts by isometries with $X$ fixed} \\
&=d_{M/G}([m],[x])+d_{M/G}([x],[m'])\\
&\le 2\cdot\diam(M/G).
\end{align*}
\end{remark}

We now provide a second example to that quotienting by a group action may increase the Gromov--Hausdorff distance.

\begin{example}[$d_\gh(X/G,M/G) > d_\gh(X,M)$]
\label{ex:GH-Hausdorff_b}
Let $G$ be a group acting properly by isometries on a metric space $M$ and $X\subseteq M$ be $G$-invariant.
We can find an $M$, $X$, and $G$ such that $d_\gh(X/G,M/G) > d_\gh(X,M)$.

Let $G = (\Z,+)$, and let $M = \Z$.
We let $d\colon \Z\times \Z\rightarrow [0,\infty)$ be the discrete metric where $d(m,n) = 0$ if $m=n$ and $d(m,n) = 1$ if $m\not=n$.
     
Define a map $G\times M \rightarrow M$ by $(g,m)\mapsto 2g+m$.
This is an action of a group on itself.
Indeed, note that for any $g_1,g_2 \in G$ and $m\in M$ we have $g_1\cdot(g_2\cdot m) = g_1\cdot(2g_2 + m) = 2g_1 +2g_2+m = 2(g_1 + g_2) + m = (g_1+g_2)\cdot m$ and $0\cdot 
 m = 0 + m = m$.

The group $G$ acts properly because for every $m\in M$ we have $B_{1/2}(m)=\{m\}$, and the only element $g\in G$ such that $g\cdot B_{1/2}(m)\cap B_{1/2}(m)\neq\emptyset$ is $g=0$.
The action is isometric because it acts by translations.

Now consider the G-invariant set $X=2\Z$.
Since $G$ acts transitively on $2\Z$ we have $X/G = \{[0]\}$.
Notice also that $M/G = \{[0],[1]\}$.
Then $d_{M/G}([0],[1]) = \inf_{g\in G}d(g\cdot 0, 1)  = 1$ since $g\cdot 0=2g$ for any $g\in G$.
Since $X/G$ is a singleton we know that
\[d_\gh(X/G, M/G) = \tfrac{1}{2}\diam(M/G) = \tfrac{1}{2}.\]
Since we have the discrete metric on $\Z$, the map $\phi\colon \Z\rightarrow \Z$ given by $\phi(m)= 2m$ is an isometry.
We have $d_\gh(2\Z,\Z) \leq d_\h^{\Z}(2\Z,\phi(\Z)) = 0$.
So
\[d_\gh(X, M) = 0 < \tfrac{1}{2}= d_\gh(X/G, M/G).\]

Here we leverage the fact that $M$ is isometric to a proper subset of itself to show that the Gromov--Hausdorff distance between $M$ and its subset $X$ is $0$.
This is not possible for compact metric spaces: a compact metric space cannot be isometric to a proper subset of itself~\cite[Theorem~1.6.14]{BuragoBuragoIvanov}.
In the next section we will place more restrictions on our problem so that a reduction like this is not possible.
\end{example}

\subsection{The Gromov--Hausdorff distance for subsets under quotients}
\label{sec:results-GH-ratio}

We now show that for a finite metric space $M$, a group $G$ which acts properly by isometries on $M$, and a $G$-invariant subset $X\subseteq M$, the ratio $\frac{d_\gh(X/G,M/G)}{d_\gh(X,M)}$ can be made both arbitrarily large and arbitrarily small.
This means that the relationship $d_\h(X/G, Y/G) = d_\h(X, Y)$ we showed for the Hausdorff distance in Theorem~\ref{thm:Hausdorff} does not have even an \emph{approximate} version with the Gromov--Hausdorff distance, in general.

\begin{theorem}
\label{thm:gh-ratio}
\ 
\begin{enumerate}
\item[(a)] For any positive integer $n$, there is a finite metric space $M$ with a proper isometric $G$ action and a $G$-invariant $X\subseteq M$ with $d_\gh(X/G, M/G) \leq \frac{1}{n}d_\gh(X, M)$.
\item[(b)] For any positive integer $n$, there is a finite metric space $M$ with a proper isometric $G$ action and a $G$-invariant $X\subseteq M$ with $d_\gh(X/G, M/G) \geq n\cdot d_\gh(X, M)$.
\end{enumerate}
\end{theorem}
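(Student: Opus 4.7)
The plan is to prove parts (a) and (b) separately by constructing explicit finite metric space examples in each case.

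For part (a), I would take $M = X \cup O$, where $X$ consists of several fixed points of $G$ and $O$ is a single $G$-orbit with $|O|$ much larger than $|X|$. The goal is to choose the distances so that (i) the pairwise distances inside $O$ are all bounded below by some fixed $\beta > 0$, so that pigeonhole on any correspondence $R \subseteq X \times M$ produces two pairs $(x, o_1), (x, o_2) \in R$ with $d_M(o_1, o_2) \geq \beta$ and hence $\dis(R) \geq \beta$, yielding $d_\gh(X, M) \geq \beta/2$; while (ii) in the quotient $M/G$, the single class $[o]$ lies within some small distance $\alpha$ of $X/G$, so that the inclusion correspondence $X/G \hookrightarrow M/G$ augmented by the single pair $([x_0], [o])$ for $[x_0]$ a nearest point to $[o]$ has distortion at most $\alpha$ by the triangle inequality in $M/G$, yielding $d_\gh(X/G, M/G) \leq \alpha/2$. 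Tuning the parameters so that $\alpha/\beta \leq 1/n$ then produces the desired ratio.

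The critical subtlety, which I expect to be the main obstacle for part (a), is reconciling requirements (i) and (ii): if $G$ fixes $X$ pointwise, then by the isometry of the action $d(x, g \cdot o) = d(x, o)$, so the quotient distance $d_{M/G}([x], [o]) = \inf_g d(x, g \cdot o)$ collapses to the ambient distance $d(x, o)$, which via the triangle inequality bounds the internal diameter of $O$ by $2\alpha$ --- we cannot then have $\beta$ much bigger than $\alpha$. I would overcome this by letting $G$ act non-trivially on $X$ as well, so that $\inf_g d(x, g \cdot o)$ can be realized using different representatives of $X$ for different orbit elements, leaving $O$ free to remain spread out in $M$ while each of its elements is close to some element of $X$.

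For part (b), the plan is reversed: construct $M = X \cup Y$ where $Y$ is a collection of $G$-orbits sitting close to $X$ in the ambient metric --- so that a correspondence can pair each point of $Y$ with a nearby point of $X$ with small distortion, making $d_\gh(X, M)$ small --- but whose orbit classes in $M/G$ are forced far apart because the infimum $\inf_g d(y, g \cdot y')$ between representatives of distinct orbits cannot be made small. The resulting large $\diam(M/G)$ then yields $d_\gh(X/G, M/G) \geq \tfrac{1}{2}|\diam(X/G) - \diam(M/G)| \geq n \cdot d_\gh(X, M)$ after scaling the parameters appropriately. The final step in each part is verification: the upper bounds via the explicit correspondences described, and the lower bounds via pigeonhole counting (for part (a)) or the diameter comparison (for part (b)).
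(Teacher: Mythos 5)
Both halves of your plan contain a genuine gap, and in part (a) the gap is fatal: the two bounds you propose are provably incompatible once $n\ge 3$. Your lower bound $d_\gh(X,M)\ge \beta/2$ requires all pairwise distances in the orbit $O$ to be at least $\beta$. But the nearest-point correspondence always gives $d_\gh(X,M)\le d_\h(X,M)$, and by Theorem~\ref{thm:Hausdorff} we have $d_\h(X,M)=d_\h(X/G,M/G)=\alpha$, since $M/G\setminus X/G$ is the single class $[o]$ at distance $\alpha$ from $X/G$. Hence $\beta/2\le d_\gh(X,M)\le\alpha$, i.e.\ $\beta\le 2\alpha$, while your argument needs $\beta\ge n\alpha$. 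So no choice of parameters works, and letting $G$ act nontrivially on $X$ does not help: the obstruction uses only $G$-invariance of $X$, not pointwise fixing. The missing idea is that the upper bound on $d_\gh(X/G,M/G)$ must \emph{not} come from a correspondence extending the identity on $X/G$. The paper arranges $M/G$ to be a chain $[r],[x_1],\dots,[x_{n-1}],[y]$ hanging off a root with the additive tree metric $d([x_i],[x_j])=i+j$, and $X/G$ the same chain with the last point deleted; the \emph{shift} correspondence $[x_i]\mapsto[x_{i+1}]$ pairs each point with a far-away point yet has distortion exactly $2$, so $d_\gh(X/G,M/G)\le 1$ even though $d_\h(X/G,M/G)=n$. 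Your pigeonhole lower bound ($n+1$ points of $M$ pairwise at distance $2n$ against $|X|=n$) is exactly the paper's lower bound; it is your upper bound that cannot work.

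Part (b) has the mirror-image problem. If every point of $Y$ lies within $h$ of $X$, where $h$ denotes $d_\h(X,M)$, then for any $[m],[m']\in M/G$ the triangle inequality in $M/G$ gives $d_{M/G}([m],[m'])\le 2h+\diam(X/G)$, so $\tfrac{1}{2}|\diam(M/G)-\diam(X/G)|\le h$; meanwhile a proximity-based correspondence only certifies $d_\gh(X,M)\le h$. The diameter comparison therefore never certifies a ratio better than $1$ unless you separately arrange $d_\gh(X,M)\le h/n$, which pairing points with \emph{nearby} points cannot by itself deliver. The paper instead obtains $d_\gh(X,M)\le 1$ from the same shift correspondence (pairing $a_i^j$ with $a_{i+1}^j$, which are at distance $2i+1$, not nearby), and obtains $d_\gh(X/G,M/G)\ge n$ not from diameters (the diameter gap in its example is bounded independently of $n$) but from a packing argument: $M/G$ contains $n+1$ points pairwise at distance $2n$ while $|X/G|=n$. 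Both halves of the theorem hinge on the fact that a chain of $n+1$ points is at Gromov--Hausdorff distance at most $1$ from a chain of $n$ points even though the Hausdorff distance between them is $n$; your proposal never exploits this phenomenon and cannot succeed without it.
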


\begin{proof}[Proof of Theorem~\ref{thm:gh-ratio}(a)]
We let $n\ge 2$, since it suffices to prove the result for any integer $n\ge 2$.
We will find a finite metric space $M$ with a proper isometric action by a finite group $G$ and a $G$-invariant subset $X$ such that $d_\gh(X/G, M/G) \leq \frac{1}{n}d_\gh(X, M)$.

Let $M = \{r, x_1, x_2, \cdots , x_{n-1}, y_1, y_2, \cdots y_{n+1}\}$, where we define the distances by
\begin{align*}
d(r, x_i) &= i &&\text{for all }1\leq i \leq n-1,\\
d(r, y_i) &= n &&\text{for all }1\leq i \leq n+1,\\
d(x_i, y_j) &= i+n &&\text{for all }1\leq i \leq n-1\text{ and }1\leq j \leq n+1,\\
d(x_i, x_j) &= i + j &&\text{for all }1\leq i \leq n-1\text{ and }1\leq j \leq n-1,\text{ and}\\
d(y_i, y_j) &= 2n &&\text{for all }1\leq i \leq n+1\text{ and }1\leq j \leq n+1.
\end{align*}
As shown in Figure~\ref{fig:ratio-a}, $M$ can be viewed as the vertex set of a tree with a single root $r$ and $2n$ vertices.

\begin{figure}
    \centering%
    \begin{tikzpicture}
        \def\xr{1.5} 
        \def\xrd{0.5} 
        
        \def\ellipAr{2.4} 
        \def\ellipArd{0.05} 
        \def\ellipAt{13} 
        \def\ellipAtd{-7} 
        
        \def\ellipBr{3.1} 
        \def\ellipBrd{0} 
        \def\ellipBt{-110} 
        \def\ellipBtd{-7} 

        \foreach \i in {0,1,2} {
            \draw[fill] ({(\ellipAr+\i*\ellipArd)*cos(\ellipAt+\i*\ellipAtd)},{(\ellipAr+\i*\ellipArd)*sin(\ellipAt+\i*\ellipAtd)}) circle (1pt);
        }

        \foreach \i in {0,1,2} {
            \draw[fill] ({(\ellipBr+\i*\ellipBrd)*cos(\ellipBt+\i*\ellipBtd)},{(\ellipBr+\i*\ellipBrd)*sin(\ellipBt+\i*\ellipBtd)}) circle (1pt);
        }
        
        \foreach \i in {0,1,2} {
            \draw[line width=1pt] (0,0) -- ({(\xr+\i*\xrd)*sin(\i*30)},{(\xr+\i*\xrd)*cos(\i*30)});
        }
        \draw[line width=1pt] (0,0) -- ({(\xr+3*\xrd)*sin(120)},{(\xr+3*\xrd)*cos(120)});
        \foreach \i in {5,6,8} {
            \draw[line width=1pt] (0,0) -- ({(\xr+4*\xrd)*sin(\i*30)},{(\xr+4*\xrd)*cos(\i*30)});
        }

        \node[left] at ({(\xr)*sin(0)/2},{(\xr)*cos(0)/2}) {$1$};
        \node[above left] at ({(\xr+\xrd)*sin(30)/2},{(\xr+\xrd)*cos(30)/2}) {$2$};
        \node[above left] at ({(\xr+2*\xrd)*sin(60)/2},{(\xr+2*\xrd)*cos(60)/2}) {$3$};
        \node[above right] at ({(\xr+3*\xrd)*sin(120)/2},{(\xr+3*\xrd)*cos(120)/2}) {$n-1$};
        \node[right] at ({(\xr+4*\xrd)*sin(150)/2},{(\xr+4*\xrd)*cos(150)/2}) {$n$};
        \node[right] at ({(\xr+4*\xrd)*sin(180)/2},{(\xr+4*\xrd)*cos(180)/2}) {$n$};
        \node[below right] at ({(\xr+4*\xrd)*sin(240)/2},{(\xr+4*\xrd)*cos(240)/2}) {$n$};
    
        \draw[fill] (0,0) circle (2pt) node[above left] {$r$};
        \draw[fill] ({(\xr)*sin(0)},{(\xr)*cos(0)}) circle (2pt) node[above] {$x_1$};
        \draw[fill] ({(\xr+\xrd)*sin(30)},{(\xr+\xrd)*cos(30)}) circle (2pt) node[above] {$x_2$};
        \draw[fill] ({(\xr+2*\xrd)*sin(60)},{(\xr+2*\xrd)*cos(60)}) circle (2pt) node[above] {$x_3$};
        \draw[fill] ({(\xr+3*\xrd)*sin(120)},{(\xr+3*\xrd)*cos(120)}) circle (2pt) node[right] {$x_{n-1}$};
        \draw[fill] ({(\xr+4*\xrd)*sin(150)},{(\xr+4*\xrd)*cos(150)}) circle (2pt) node[below right] {$y_1$};
        \draw[fill] ({(\xr+4*\xrd)*sin(180)},{(\xr+4*\xrd)*cos(180)}) circle (2pt) node[below] {$y_2$};
        \draw[fill] ({(\xr+4*\xrd)*sin(240)},{(\xr+4*\xrd)*cos(240)}) circle (2pt) node[below] {$y_{n+1}$};
    \end{tikzpicture}
\hspace{20mm}
    \begin{tikzpicture}
        \def\xr{1.5} 
        \def\xrd{0.5} 
        
        \def\ellipAr{2.4} 
        \def\ellipArd{0.05} 
        \def\ellipAt{13} 
        \def\ellipAtd{-7} 
        
        \def\ellipBr{3.1} 
        \def\ellipBrd{0} 
        \def\ellipBt{-110} 
        \def\ellipBtd{-7} 

        \foreach \i in {0,1,2} {
            \draw[fill] ({(\ellipAr+\i*\ellipArd)*cos(\ellipAt+\i*\ellipAtd)},{(\ellipAr+\i*\ellipArd)*sin(\ellipAt+\i*\ellipAtd)}) circle (1pt);
        }
        
        \foreach \i in {0,1,2} {
            \draw[line width=1pt] (0,0) -- ({(\xr+\i*\xrd)*sin(\i*30)},{(\xr+\i*\xrd)*cos(\i*30)});
        }
        \draw[line width=1pt] (0,0) -- ({(\xr+3*\xrd)*sin(120)},{(\xr+3*\xrd)*cos(120)});
        \foreach \i in {5} {
            \draw[line width=1pt] (0,0) -- ({(\xr+4*\xrd)*sin(\i*30)},{(\xr+4*\xrd)*cos(\i*30)});
        }

        \node[left] at ({(\xr)*sin(0)/2},{(\xr)*cos(0)/2}) {$1$};
        \node[above left] at ({(\xr+\xrd)*sin(30)/2},{(\xr+\xrd)*cos(30)/2}) {$2$};
        \node[above left] at ({(\xr+2*\xrd)*sin(60)/2},{(\xr+2*\xrd)*cos(60)/2}) {$3$};
        \node[above right] at ({(\xr+3*\xrd)*sin(120)/2},{(\xr+3*\xrd)*cos(120)/2}) {$n-1$};
        \node[right] at ({(\xr+4*\xrd)*sin(150)/2},{(\xr+4*\xrd)*cos(150)/2}) {$n$};
    
        \draw[fill] (0,0) circle (2pt) node[above left] {$[r]$};
        \draw[fill] ({(\xr)*sin(0)},{(\xr)*cos(0)}) circle (2pt) node[above] {$[x_1]$};
        \draw[fill] ({(\xr+\xrd)*sin(30)},{(\xr+\xrd)*cos(30)}) circle (2pt) node[above] {$[x_2]$};
        \draw[fill] ({(\xr+2*\xrd)*sin(60)},{(\xr+2*\xrd)*cos(60)}) circle (2pt) node[above] {$[x_3]$};
        \draw[fill] ({(\xr+3*\xrd)*sin(120)},{(\xr+3*\xrd)*cos(120)}) circle (2pt) node[right] {$[x_{n-1}]$};
        \draw[fill] ({(\xr+4*\xrd)*sin(150)},{(\xr+4*\xrd)*cos(150)}) circle (2pt) node[below right] {$[y_1]=\ldots=[y_{n+1}]$};
    \end{tikzpicture}    
\caption{Proof of Theorem~\ref{thm:gh-ratio}(a).
(Left) Space $M$.
(Right) Space $M/G$.}
\label{fig:ratio-a}
\end{figure}
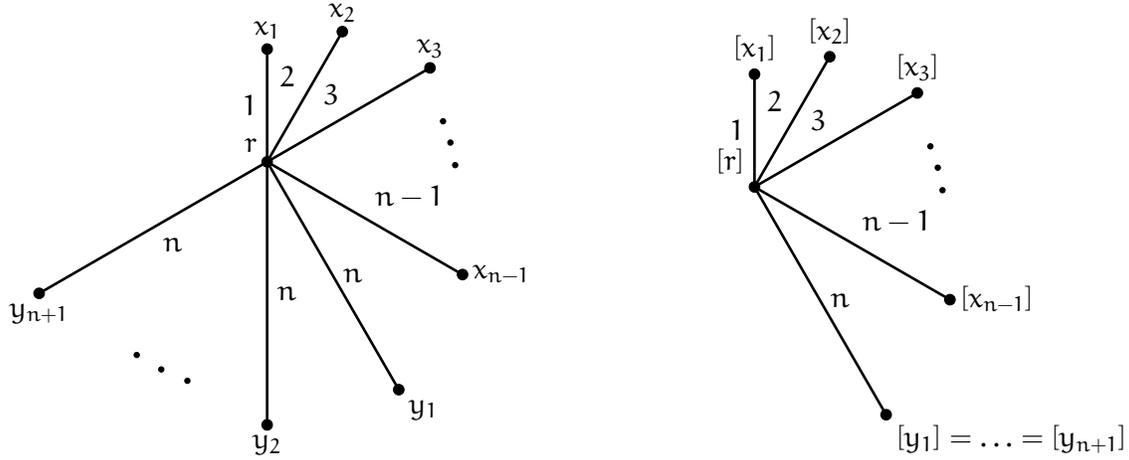

Let $G = S_{n+1}$ be the symmetric group on $n+1$ elements.
Let $G$ act on $M$ by permuting the elements $\{y_1, \cdots y_{n+1}\}$ according to their subscripts, keeping the remaining elements in $M$ fixed.
Explicitly, for $\sigma\in S_{n+1}$, we set $\sigma\cdot y_i=y_{\sigma(i)}$ for $1\le i\le n+1$, we set $\sigma\cdot r=r$, and we set $\sigma\cdot x_i=x_i$ for $1\le i\le n-1$.
Let $X$ be the $G$-invariant set $\{r, x_1, x_2, \cdots , x_{n-1}\}$, which remains fixed under the group action.

We will show that $d_\gh(X, M)  \geq n$.
Observe that $|X|=n$, and that $M$ has $n + 1$ points $y_1,\ldots,y_{n+1}$ with $d(y_i,y_j)=2n$ for all $i\neq j$.
Therefore, for any correspondence $C\subseteq X\times M$, there is some point $x \in X$ which is corresponded with two unique points, say $y_i, y_j \in M$, with $d(y_i,y_j)=2n$.
From Definition~\ref{def:AdditiveDistortion} we can get a lower bound for our distortion by 
\[\dis(\mathbf{C})\coloneqq\sup_{(x_1,m_1), (x_2,m_2) \in \textbf{C}} |d_X(x_1,x_2) - d_M(m_1,m_2)| \ge |d_X(x,x) - d_M(y_i,y_j)| = |0 - 2n| = 2n.\]
Since this is a lower bound for the distortion of every correspondence, using Theorem~\ref{thm:distortion_GH} we have $d_\gh(X, M) = \frac{1}{2}\inf_{R}(\dis(R)) \geq \frac{1}{2}\dis(C) \geq n$.

Now we show that $d_\gh(X/G,M/G) \leq 1$.
For notational convenience, define $x_0\coloneqq r$ and $x_n\coloneqq y_1$.
Notice that $X/G$ and $X$ are isometric, and that $M/G$ and $X\cup\{x_n\}$ are isometric.
Consider the correspondence $R\subseteq X/G\times M/G$ consisting of $([x_0],[x_0])$ and of $([x_i],[x_{i+1}])$ for each $0\leq i\leq n-1$.
We have \begin{align*}
|d_{X/G}([x_0],[x_i])-d_{M/G}([x_0],[x_{i+1}])| & =|i-(i+1)|=1, \\
|d_{X/G}([x_i],[x_j])-d_{M/G}([x_{i+1}],[x_{j+1}])| & =|(i+j)-(i+j+2)|=2
\end{align*} for all $0\leq i\leq n-1$ and $0\leq j\leq n-1$.
Hence, $\dis(R)=2$ and so $d_\gh(X/G,M/G)\leq1$.

Therefore $d_\gh(X/G,M/G)\le \frac{1}{n}\cdot n\le \frac{1}{n}\cdot d_\gh(X,M)$, as desired.
\end{proof}

\begin{figure}[htb]
\centering%
\begin{tikzpicture}[scale = 1]
    \def\xr{2.4} 
    \def\xrd{0.5} 
    \def\thetad{75} 
    \def\thetar{40} 
    \def\lenr{0.7} 
    \def\labelr{0.25} 
    \def\labelrp{0.6} 
    \def\elliptd{6} 
    \def\ellipr{1.0} 
    \def\gellipr{3.0} 
    \def\gellipt{145} 
    \def\gelliptd{8} 

    \newcommand{\drawgroup}[7]{%
        \foreach \j in {0,1,2} {
            \draw[fill] ({(#1)*\ellipr*sin(#2+(#3)/2+(\j-1)*\elliptd)},{(#1)*\ellipr*cos(#2+(#3)/2+(\j-1)*\elliptd))}) circle (1pt);
        }

        \coordinate (A) at ({(#1)*sin(#2)},{(#1)*cos(#2)});
        \coordinate (B) at ({(#1)*sin(#2+#3)},{(#1)*cos(#2+#3)});
        
        \draw[line width=1pt] (0,0) -- (A);
        \draw[line width=1pt] (0,0) -- (B);
            
        \node at ({(#1)*sin(#2)*\lenr-(#5)*cos(#2))},{(#1)*cos(#2)*\lenr+(#5)*sin(#2)}) {#4};
        \node at ({(#1)*sin(#2+#3)*\lenr-(#5)*cos(#2+#3))},{(#1)*cos(#2+#3)*\lenr+(#5)*sin(#2+#3)}) {#4};
            
        \node at ({#1+1.2+\labelrp)*sin(#2+#3/2))},{(#1+1.2+\labelrp)*cos(#2+#3/2))}) {$n+1$};

        \node at ({(#1+\labelrp)*sin(#2)},{(#1+\labelrp)*cos(#2)}) {#6};
        \node at ({(#1+\labelrp)*sin(#2+#3)},{(#1+\labelrp)*cos(#2+#3)}) {#7};
            
        \draw[fill] (A) circle (2pt);
        \draw[fill] (B) circle (2pt);

        \draw[line width=1.5pt,decorate,decoration={calligraphic brace,amplitude=6pt,raise=32pt}] (A) -- (B);
    }

    \foreach \j in {0,1,2} {
        \draw[fill] ({\gellipr*sin(\gellipt+(\j-1)*\gelliptd)},{\gellipr*cos(\gellipt+(\j-1)*\gelliptd))}) circle (1.25pt);
    }

    \drawgroup{\xr}{0}{\thetar}{$1$}{\labelr}{$a_{1}^{1}$}{$a_{1}^{n+1}$}
    \drawgroup{\xr+\xrd}{75}{\thetar}{$2$}{\labelr}{$a_{2}^{1}$}{$a_{2}^{n+1}$}
    \drawgroup{\xr+2*\xrd}{180}{\thetar}{$n-1$}{0.55}{$a_{n-1}^{1}$}{$a_{n-1}^{n+1}$}
    \drawgroup{\xr+3*\xrd}{255}{\thetar}{$n$}{\labelr}{$a_{n}^{1}$}{$a_{n}^{n+1}$}
        
    \draw[fill] (0,0) circle (2pt) node[above left,xshift=0,yshift=4] {$r$};
\end{tikzpicture}
\vskip 5mm
\begin{tikzpicture}[scale = 1]
    \def\xr{1.5} 
    \def\xrd{0.5} 
        
    \def\ellipAr{2.4} 
    \def\ellipArd{0.05} 
    \def\ellipAt{13} 
    \def\ellipAtd{-7} 
        
    \def\ellipBr{3.1} 
    \def\ellipBrd{0} 
    \def\ellipBt{-110} 
    \def\ellipBtd{-7} 

    \foreach \i in {0,1,2} {
        \draw[fill] ({(\ellipAr+\i*\ellipArd)*cos(\ellipAt+\i*\ellipAtd)},{(\ellipAr+\i*\ellipArd)*sin(\ellipAt+\i*\ellipAtd)}) circle (1pt);
    }

    \foreach \i in {0,1,2} {
        \draw[fill] ({(\ellipBr+\i*\ellipBrd)*cos(\ellipBt+\i*\ellipBtd)},{(\ellipBr+\i*\ellipBrd)*sin(\ellipBt+\i*\ellipBtd)}) circle (1pt);
    }
        
    \foreach \i in {0,1,2} {
        \draw[line width=1pt] (0,0) -- ({(\xr+\i*\xrd)*sin(\i*30)},{(\xr+\i*\xrd)*cos(\i*30)});
    }
    \draw[line width=1pt] (0,0) -- ({(\xr+3*\xrd)*sin(120)},{(\xr+3*\xrd)*cos(120)});
    \foreach \i in {5,6,8} {
        \draw[line width=1pt] (0,0) -- ({(\xr+4*\xrd)*sin(\i*30)},{(\xr+4*\xrd)*cos(\i*30)});
    }

    \node[left] at ({(\xr)*sin(0)/2},{(\xr)*cos(0)/2}) {$1$};
    \node[above, xshift = -3pt] at ({(\xr+\xrd)*sin(30)/2},{(\xr+\xrd)*cos(30)/2}) {$2$};
    \node[above] at ({(\xr+2*\xrd)*sin(60)/2},{(\xr+2*\xrd)*cos(60)/2}) {$3$};
    \node[above right] at ({(\xr+3*\xrd)*sin(120)/2},{(\xr+3*\xrd)*cos(120)/2}) {$n-1$};
    \node[right] at ({(\xr+4*\xrd)*sin(150)/2},{(\xr+4*\xrd)*cos(150)/2}) {$n$};
    \node[right] at ({(\xr+4*\xrd)*sin(180)/2},{(\xr+4*\xrd)*cos(180)/2}) {$n$};
    \node[below right] at ({(\xr+4*\xrd)*sin(240)/2},{(\xr+4*\xrd)*cos(240)/2}) {$n$};
    
    \draw[fill] (0,0) circle (2pt) node[above left] {$r$};
    \draw[fill] ({(\xr)*sin(0)},{(\xr)*cos(0)}) circle (2pt) node[above] {$[a_{1}^{j}]$};
    \draw[fill] ({(\xr+\xrd)*sin(30)},{(\xr+\xrd)*cos(30)}) circle (2pt) node[above] {$[a_{2}^{j}]$};
    \draw[fill] ({(\xr+2*\xrd)*sin(60)},{(\xr+2*\xrd)*cos(60)}) circle (2pt) node[above] {$[a_{3}^{j}]$};
    \draw[fill] ({(\xr+3*\xrd)*sin(120)},{(\xr+3*\xrd)*cos(120)}) circle (2pt) node[right] {$[a_{n-1}^{j}]$};
    \draw[fill] ({(\xr+4*\xrd)*sin(150)},{(\xr+4*\xrd)*cos(150)}) circle (2pt) node[below right] {$[a_{n}^{1}]$};
    \draw[fill] ({(\xr+4*\xrd)*sin(180)},{(\xr+4*\xrd)*cos(180)}) circle (2pt) node[below] {$[a_{n}^{2}]$};
    \draw[fill] ({(\xr+4*\xrd)*sin(240)},{(\xr+4*\xrd)*cos(240)}) circle (2pt) node[below] {$[a_{n}^{n+1}]$};
\end{tikzpicture}
    
\caption{
Proof of Theorem~\ref{thm:gh-ratio}(b).
(Top) Space $M$.
(Bottom) Space $M/G$.
For $1\le i\le n-1$, we write $[a_i^j]$ to denote the equivalence class $[a_i^1]=[a_i^2]=\ldots=[a_i^{n+1}]$.
}
\label{fig:ratio-b}
\end{figure}

\begin{proof}[Proof of Theorem~\ref{thm:gh-ratio}(b)]
For all natural numbers $n \geq 2$, we will find a finite metric space $M$ with a proper isometric action by a finite group $G$ and a $G$-invariant subset $X$ such that $d_\gh(X/G, M/G) \geq n\cdot d_\gh(X, M)$.

Let $M = \{r\} \cup \{a_i^j \mid 1\leq i \leq n, 1 \leq j \leq n+1\}$; see Figure~\ref{fig:ratio-b}.
We define the distances by
\[
d(r, a_i^j) = i \quad\text{and}\quad d(a_i^j, a_{i'}^{j'})=i+i' \quad\text{for all}\quad 1\leq i,i' \leq n \quad\text{and}\quad 1 \leq j \leq n+1.
\]
Note $M$ can be viewed as the root and leaf vertices of the tree in Figure~\ref{fig:ratio-b}.

Let $G = S_{n+1}$ be the symmetric group on $n+1$ elements.
It acts on $M$ as follows.
For $\sigma \in G$, we set $\sigma \cdot r = r$, and we set
\[\sigma \cdot a_i^j =  
\begin{dcases*}
a_{i}^{\sigma(j)} & if $i \leq n-1$\\
a_i^j & if $i = n$.
\end{dcases*}
\]
We let $X =  \{r\} \cup \{a_i^j \mid 1\leq i \leq n-1, 1 \leq j \leq n+1\}$.
Note $G\cdot X=X$ and so $X$ is indeed $G$-invariant.

Now we show that $d_\gh(X,M) \leq 1$.
For convenience, define $a_0^j\coloneqq r$ for each $1\leq j\leq n+1$.
Consider the correspondence $R\subseteq X \times M$ consisting of $(a_0^j,a_0^j)$ for all $1\leq j\leq n+1$ and of $(a_i^j,a_{i+1}^j)$ for all $0\leq i\leq n-1$ and $1\leq j\leq n+1$.
The proof that $\dis(R)=2$ is nearly identical to that of the calculation of the distortion of the correspondence $R$ in the proof of Theorem~\ref{thm:gh-ratio}(a), from which we may conclude that $d_\gh(X,M)\leq1$.

To see that $d_\gh(X/G, M/G) \geq n$, note that $X/G$ and $M/G$ are in fact isometric to the metric spaces ($X$ and $Y$) used in the proof of Theorem~\ref{thm:gh-ratio}(a), and hence by the same argument given there we have $d_\gh(X/G, M/G) \ge n$.

Combining these two bounds, we see that $d_\gh(X/G, M/G) \ge n = n\cdot 1 \ge n\cdot d_\gh(X, M)$, as desired.
\end{proof}

\section{Additional density assumptions}
\label{sec:results-density}

Our Theorem~\ref{thm:gh-ratio} demonstrates that the Gromov--Hausdorff distance is not particularly well-behaved under group actions.
What if we have not only that $X$ is a $G$-invariant subset of $M$, but also that $X$ is arbitrarily dense in $M$ --- can one do better in this setting?

We show the answer is no.
Indeed, in Section~\ref{ssec:results-density} we show that there exists a compact metric space $M$ equipped with a proper isometric $G$ action such that for any $\delta>0$, there is a $G$-invariant subset $X\subseteq M$ satisfying $d_\h(X,M)<\delta$ and $d_\gh(X/G,M/G) > d_\gh(X,M)$.
We generalize this construction in Section~\ref{ssec:results-density-generalization}.
Then, using this generalization, we strengthen the aforementioned result, showing in Section~\ref{ssec:arb-dense-arb-constants} that for any positive integer $n$, there is a compact metric space $M$ with a proper isometric $G$ action such that for any $\delta>0$, there is a $G$-invariant subset $X\subseteq M$ with $d_\h(X,Z)<\delta$ satisfying $\frac{d_\gh(X/G,M/G)}{d_\gh(X,M)}\ge n$.

Most of the examples in this paper so far have been with finite metric spaces.
When we move to the context where we want to allow the subset $X\subseteq M$ to be arbitrarily dense, we will also move to the more complicated setting where $M$ is often infinite, though still often compact.

\subsection{Density alone is not enough}
\label{ssec:results-density}

It turns out that additional density assumptions alone do not suffice.
Indeed, in the following theorem we prove that there is a compact metric space $M$ such that for any $\delta>0$, there is a $G$-invariant subset $X\subseteq M$ that is $\delta$-close (meaning $d_\h(X,M)<\delta$) but that still satisfies $d_\gh(X/G,M/G)>d_\h(X,M)$.

\begin{theorem}
\label{thm:GH-Hausdorff_dense}
There exists a compact metric space $M$ equipped with a proper action of a group $G$ by isometries such that for any $\delta>0$, there is a $G$-invariant subset $X\subseteq M$ satisfying $d_\h(X,M)<\delta$ and $d_\gh(X/G,M/G) > d_\gh(X,M)$.
\end{theorem}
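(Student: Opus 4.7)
My strategy is to build a compact analogue of the finite space from the proof of Theorem~\ref{thm:gh-ratio}(b). Fix an integer $n \ge 3$. Let $M$ consist of a root $r$ together with, for each scale $k \ge 1$, points $a_{k,i,j}$ for $1 \le i \le n$ and $1 \le j \le n+1$, equipped with the ``star'' metric $d(r, a_{k,i,j}) = i/2^k$ and $d(a_{k,i,j}, a_{k',i',j'}) = i/2^k + i'/2^{k'}$ (every geodesic passes through $r$). Only finitely many points lie outside any ball $B(r, \varepsilon)$, so $M$ is totally bounded; together with completeness this gives compactness. Let $G = S_{n+1}$ act diagonally across all scales by $\sigma \cdot r = r$, $\sigma \cdot a_{k,i,j} = a_{k,i,\sigma(j)}$ for $i \le n-1$, and fixing each $a_{k,n,j}$. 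This action is isometric, and it is proper because $G$ is finite.

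Given $\delta > 0$, I choose $K$ with $n/2^K < \delta$ and set $X = M \setminus \{a_{k,n,j} : k \ge K,\ 1 \le j \le n+1\}$. Then $X$ is $G$-invariant, since the removed set consists of $G$-fixed points, and $d_\h(X, M) = n/2^K < \delta$ because $r \in X$ lies within distance $n/2^k \le n/2^K$ of every removed point.

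To upper-bound $d_\gh(X, M)$, I apply scale-by-scale the shift correspondence from the proof of Theorem~\ref{thm:gh-ratio}(b): take the identity on scales $k < K$, and on each small scale $k \ge K$ include the pairs $(r, a_{k,1,j})$ for each $j$ together with $(a_{k,i,j}, a_{k,i+1,j})$ for $1 \le i \le n-1$ and each $j$. The crucial feature enabling a small distortion is that at every level $i \le n-1$ the set $X$ still contains $n+1$ distinct copies $a_{k,i,j}$ that can absorb the shift independently. A case analysis of the pairwise distortions (within one scale, across two scales, and between the shifted and identity parts) gives distortion at most $2/2^K$, so $d_\gh(X, M) \le 1/2^K$. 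To lower-bound $d_\gh(X/G, M/G)$, I apply the pigeonhole argument of Theorem~\ref{thm:gh-ratio}(a) to the $n+1$ level-$n$ classes $[a_{K,n,1}], \ldots, [a_{K,n,n+1}]$ in $M/G$, which are pairwise at distance $2n/2^K$. After quotienting, each level-$i$ orbit with $i \le n-1$ collapses to a single point of $X/G$, so the analogue of the shift must reuse some element of $X/G$ as correspondent for two different level-$n$ classes, incurring distortion $2n/2^K$; alternatively, one must supply $n+1$ distinct elements of $X/G$ whose pairwise distances all lie close to $2n/2^K$, and an enumeration of the distance spectrum of $X/G$ (consisting of sums $i/2^k + i'/2^{k'}$ with $i \le n-1$ for $k \ge K$ and $i \le n$ for $k < K$) shows this is impossible within the tolerance achieved by the above correspondence for $(X,M)$. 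Combining these two cases forces $d_\gh(X/G, M/G) > 1/2^K \ge d_\gh(X, M)$.

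The main obstacle is the final combinatorial step: verifying that no correspondence between $X/G$ and $M/G$ can match the distortion attained in the unquotiented setting. The root of the difficulty is that each of the candidate correspondents of $[a_{K,n,j}]$ in $X/G$ must itself be covered from the $M/G$ side of the correspondence, and these cover pairs conflict with the shift pairs through distances of order $1/2^K$ that the quotient-side distance spectrum cannot simultaneously match, in contrast to the $(X,M)$ side where the $n+1$ petal copies at each level provide enough flexibility. The argument parallels the pigeonhole computation of Theorem~\ref{thm:gh-ratio}(a) but must be carried out in the scaled compact setting; the choice $n \ge 3$ provides enough rigidity to deliver the required strict inequality.
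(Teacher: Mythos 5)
Your construction and the upper bound $d_\gh(X,M)\le 1/2^K$ via the shift correspondence are fine, but the proof has a genuine gap at exactly the step you defer: the strict lower bound on $d_\gh(X/G,M/G)$. Because every scale of your space is glued to the \emph{same} root $r$ and the inter-scale distances are comparable to the intra-scale ones, the quotient $X/G$ retains many points whose distance to $[r]$ is close to $n/2^K$, drawn from \emph{different} scales. Concretely, take $n=3$ (which your hypothesis $n\ge 3$ allows): the four classes $[a_{K,3,1}],\dots,[a_{K,3,4}]$ in $M/G$ are pairwise at distance $6/2^K$, and $X/G$ contains the four \emph{distinct} points $[a_{K,2,\cdot}]$, $[a_{K-1,1,\cdot}]$, $[a_{K-1,2,\cdot}]$, $[a_{K-2,1,\cdot}]$, with radii $2/2^K,\,2/2^K,\,4/2^K,\,4/2^K$ and hence pairwise distances in $\{4,6,8\}/2^K$ --- all within $2/2^K$ of $6/2^K$. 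That is the \emph{same} tolerance your shift correspondence attains on the unquotiented side, so the pigeonhole/spectrum argument yields only $d_\gh(X/G,M/G)\ge 1/2^K$, which matches rather than strictly exceeds your upper bound $d_\gh(X,M)\le 1/2^K$ (and in fact $d_\gh(X,M)=1/2^K$ here, by the same spectrum analysis applied to $X$). The claimed impossibility ``within the tolerance achieved by the above correspondence'' is therefore false as stated; whether the residual covering constraints rescue a strict gap is left entirely unverified, depends delicately on the arithmetic of $n$ (the count of radii near $n/2^K$ involves the $2$-adic valuations of $n-1,n,n+1$), and is precisely the content of the theorem.

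The paper's proof avoids this interference by a structurally different construction: the scaled blocks $M_i$ are pairwise separated by distances $2^{10-i}$ that are enormous compared to the block diameters $2^{-i}$, and the first half of the argument shows that any correspondence of distortion $<2^{-k}$ must map $X_i/G$ into $M_i/G$ block by block. This reduces the lower bound to a single block, where $X_i/G$ is a \emph{singleton} while $M_i/G$ has diameter $2^{-i}$, giving $d_\gh(X[k]/G,M/G)=2^{-k-1}$ exactly. The strictness of the final inequality then comes from a second mechanism your star lacks: each block is an isosceles triangle with a very short base $2^{-i}/10^{10}$, so the shift correspondence on the unquotiented side has distortion strictly less than $2^{-k}$, giving $d_\gh(X[k],M)<2^{-k-1}$. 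To repair your approach you would need both ingredients: separate the scales by distances much larger than their diameters (so the blocks cannot lend each other correspondents), and build an asymmetry into each block that makes the unquotiented distortion strictly smaller than the quotiented one.
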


\begin{proof}
To begin, define $M_i=\{a_i,b_i,c_i\}$ to be an ``isosceles triangle'' metric space, with a base of length $d(b_i,c_i)=2^{-i}/10^{10}$, and with two equal sides of length $d(a_i,b_i)=d(a_i,c_i)=2^{-i}$.
So, the points $b_i$ and $c_i$ are on the base of the triangle, and the apex is at $a_i$.
Let the group $G=\Z/2$ act by swapping the bottom points $b_i$ and $c_i$ and mapping the point $a_i$ to itself.
Let $X_i\subseteq M_i$ be the $G$-invariant subset $X_i=\{b_i,c_i\}$.

We claim $d_\gh(X_i,M_i) < 2^{-i-1}$ for all $0 \le i < \infty$.
Indeed, consider the isometric embedding of $M_i$ and $X_i$ into the tree metric space in Figure~\ref{fig:dGH-Xi-Mi}.
The Hausdorff distance between these isometrically embedded copies of $X_i$ and of $M_i$ is equal to $\frac{2^{-i}-2^{-i}\cdot 10^{-10}}{2} = 2^{-i-1} (1 - \frac{1}{10^{10}}) < 2^{-i-1}$.
These embeddings are associated with the correspondence $R_i \subseteq X_i \times M_i$ given by $R=\{(b_i,a_i),(c_i,b_i),(c_i,c_i)\}$ with $\dis(R_i)<2^i$.
Also, we note $d_\gh(X_i/G,M_i/G)=\frac{1}{2}\diam(M_i/G)=\frac{1}{2}2^{-i}=2^{-i-1}$ since $X_i/G$ is a singleton.

\begin{figure}
\centering
\begin{tikzpicture} [scale=1]
    \node[] at (-1,2.5) {\Large$M_i=$};
    
    \draw[gray,thick,dashed] (0,0)--(2,5) node[midway, above left, black]{$2^{-i}$};
     \draw[gray,thick,dashed] (2,5)--(4,0) node[midway, above right, black]{$2^{-i}$};
    \draw[gray,thick,dashed] (0,0)--(4,0) node[midway, below, black]{$\frac{2^{-i}}{10^{10}}$};

    \draw[red] (0,0)--(0,0) node[draw, shape = circle, fill = red, minimum size = 0.3cm, inner sep=0pt]{} node[above left, black] {\Large$b_i$};
    \draw[red] (2,5)--(2,5) node[draw, shape = circle, fill = red, minimum size = 0.3cm, inner sep=0pt]{} node[above left, black] {\Large$a_i$};
    \draw[red] (4,0)--(4,0) node[draw, shape = circle, fill = red, minimum size = 0.3cm, inner sep=0pt]{} node[above right, black] {\Large$c_i$};

    \node[] at (-1,-2.5) {\Large$X_i=$};
        
    \draw[gray,thick,dashed] (0,-2.5)--(4,-2.5) node[midway, below, black]{$\frac{2^{-i}}{10^{10}}$};

    \draw[myblue] (0,-2.5)--(0,-2.5) node[draw, shape = circle, fill = myblue, minimum size = 0.3cm, inner sep=0pt]{} node[above, black] {\Large$[b_i]$};
    \draw[myblue] (4,-2.5)--(4,-2.5) node[draw, shape = circle, fill = myblue, minimum size = 0.3cm, inner sep=0pt]{} node[above, black] {\Large$[c_i]$};
\end{tikzpicture}
\hspace{1.5cm}
\begin{tikzpicture} [scale=1]
    \draw[black] (2,4.5)--(2,7.5) node[midway, right, black]{$2^{-i-1} (1 - \frac{1}{10^{10}})$};
    \draw[black] (2,2.5)--(2,4.5) node[midway, right, black]{$\frac{2^{-i}}{10^{10}}$};
    \draw[black] (2,0.5)--(2,2.5) node[midway, right, black]{$2^{-i-1} (1 - \frac{2}{10^{10}})$};
    \draw[black] (0,0)--(2,0.5) node[midway, above left, black]{$\frac{2^{-i-1}}{10^{10}}$};
    \draw[black] (4,0)--(2,0.5) node[midway, above right, black]{$\frac{2^{-i-1}}{10^{10}}$};

    \draw[red] (0,0)--(0,0) node[draw, shape = circle, fill = red, minimum size = 0.3cm, inner sep=0pt]{} node[below left, black] {\Large$b_i$};
    \draw[red] (2,7.5)--(2,7.5) node[draw, shape = circle, fill = red, minimum size = 0.3cm, inner sep=0pt]{} node[above left, black]{\Large$a_i$};
    \draw[red] (4,0)--(4,0) node[draw, shape = circle, fill = red, minimum size = 0.3cm, inner sep=0pt]{} node[below right, black] {\Large$c_i$};
    \draw[myblue] (2,2.5)--(2,2.5) node[draw, shape = circle, fill = myblue, minimum size = 0.3cm, inner sep=0pt]{} node[above left, black] {\Large$[c_i]$};
    \draw[myblue] (2,4.5)--(2,4.5) node[draw, shape = circle, fill = myblue, minimum size = 0.3cm, inner sep=0pt]{} node[above left, black] {\Large$[b_i]$};
\end{tikzpicture}
\caption{
(Left) The metric spaces $M_i$ and $X_i$.
(Right) Isometric embeddings of $X_i$ and $M_i$ into a tree metric space, showing $d_\gh(X_i,M_i) < 2^{-i-1}$.
}
\label{fig:dGH-Xi-Mi}
\end{figure}
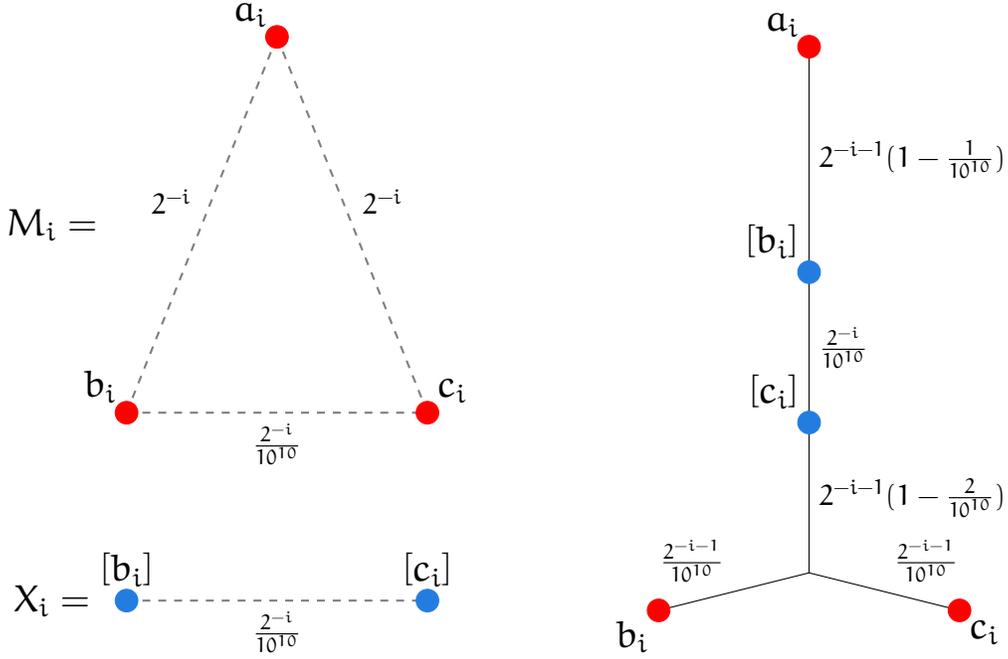

We choose our compact space $M$ to be the union of countably infinitely many of these $M_i$ metric spaces, where we set the distance between any point in $M_i$ and any point in $M_{i+1}$ to be $2^{10-i}$, along with a limit point $z$ so that $M$ is compact; see Figure~\ref{fig:dense-M-and-X2}(top).
In symbols, we define
\[ M=\left(\bigsqcup_{i=0}^{\infty} M_i\right)\sqcup\{z\}.\]
To be explicit, 
\begin{itemize}
\item for any points $m_i \in M_i$ and $m_j\in M_j$ with $i<j$, we define $d(m_i,m_j)=\sum_{k=i}^{j-1}2^{10-k}=2^{11-i}-2^{11-j}$,
\item for any point $m_i\in M_i$ we define $d(m_i,z)=\sum_{k=i}^{\infty}2^{10-k}=2^{11-i}$, and
\item the distance in $M$ between any two points in $M_i$ is the same as their distance in $M_i$.
\end{itemize}
We let the group $G=\Z/2$ act on $M$ by keeping the point $z$ and each point $a_i$ fixed, and by swapping each $b_i$ and $c_i$.

\begin{figure}[ht]
\centering

\scalebox{0.8}{

\begin{tikzpicture}
        
        \node[] at (-1,2.5) {\Large$M=$};

        \draw[black] (0,0)--(2,5) node[]{};
        \draw[black] (2,5)--(4,0) node[]{};
        \draw[black] (0,0)--(4,0) node[]{};

        \draw[red] (0,0)--(0,0) node[draw, shape = circle, fill = red, minimum size = 0.2cm, inner sep=0pt]{};
        \draw[red] (2,5)--(2,5) node[draw, shape = circle, fill = red, minimum size = 0.2cm, inner sep=0pt]{};
        \draw[red] (4,0)--(4,0) node[draw, shape = circle, fill = red, minimum size = 0.2cm, inner sep=0pt]{};

        \node[left] at (1,2.5) {\normalfont$1$};
        \node[right] at (3,2.5) {\normalfont$1$};
        \node[below] at (2,0) {\normalfont$10^{-10}$};
        
        
        \draw[gray,thick,dashed] (4,2)--(5.8,2) node[]{};
        \node[above] at (4.9,2) {\normalfont$1024$};

        \draw[gray,thick,dashed] (8,2.1)--(9.8,2.1) node[]{};
        \node[above] at (8.9,2.1) {\normalfont$512$};

        \draw[gray,thick,dashed] (11.2,2.2)--(12.6,2.2) node[]{};
        \node[above] at (11.9,2.2) {\normalfont$256$};
        
        
        \draw[black] (6,1.25)--(7,3.75) node[]{};
        \draw[black] (7,3.75)--(8,1.25) node[]{};
        \draw[black] (6,1.25)--(8,1.25) node[]{};

        \draw[red] (6,1.25)--(6,1.25) node[draw, shape = circle, fill = red, minimum size = 0.2cm, inner sep=0pt]{};
        \draw[red] (7,3.75)--(7,3.75) node[draw, shape = circle, fill = red, minimum size = 0.2cm, inner sep=0pt]{};
        \draw[red] (8,1.25)--(8,1.25) node[draw, shape = circle, fill = red, minimum size = 0.2cm, inner sep=0pt]{};
        
        \node[left] at (6.5,2.5) {\normalfont$2^{-1}$};
        \node[right] at (7.5,2.5) {\normalfont$2^{-1}$};
        \node[below] at (7,1.25){\normalfont$2^{-1}10^{-10}$};


        \draw[black] (10,1.875)--(11,1.875) node[]{};
        \draw[black] (10,1.875)--(10.5,3.125) node[]{};
        \draw[black] (10.5,3.125)--(11,1.875) node[]{};

        \draw[red] (10,1.875)--(10,1.875) node[draw, shape = circle, fill = red, minimum size = 0.2cm, inner sep=0pt]{};
        \draw[red] (11,1.875)--(11,1.875) node[draw, shape = circle, fill = red, minimum size = 0.2cm, inner sep=0pt]{};
        \draw[red] (10.5,3.125)--(10.5,3.125) node[draw, shape = circle, fill = red, minimum size = 0.2cm, inner sep=0pt]{};
        
        \node[left] at (10.25,2.5) {\small$2^{-2}$};
        \node[right] at (10.75,2.5) {\small$2^{-2}$};
        \node[below] at (10.5,1.875) {\footnotesize$2^{-2}10^{-10}$};


        \draw[black] (13,2.1875)--(13.5,2.1875) node[]{};
        \draw[black] (13,2.1875)--(13.25,2.8125) node[]{};
        \draw[black] (13.5,2.1875)--(13.25,2.8125) node[]{};

        \draw[red] (13,2.1875)--(13,2.1875) node[draw, shape = circle, fill = red, minimum size = 0.2cm, inner sep=0pt]{};
        \draw[red] (13.5,2.1875)--(13.5,2.1875) node[draw, shape = circle, fill = red, minimum size = 0.2cm, inner sep=0pt]{};
        \draw[red] (13.25,2.8125)--(13.25,2.8125) node[draw, shape = circle, fill = red, minimum size = 0.2cm, inner sep=0pt]{};

        \node[left] at (13.125,2.5) {\scriptsize$2^{-3}$};
        \node[right] at (13.375,2.5) {\scriptsize$2^{-3}$};
        \node[below] at (13.25,2.1875) {\scriptsize$2^{-3}10^{-10}$};

        \node[] at (14.5,2.5) {\normalsize$\cdots$};
        \node[] at (15,2.5) {\normalsize$z$};
        
    \end{tikzpicture}
    }
    \scalebox{0.8}{
    \begin{tikzpicture}
        
        \node[] at (-1,2.5) {\Large$X[2]=$};

        \draw[black] (0,0)--(2,5) node[]{};
        \draw[black] (2,5)--(4,0) node[]{};
        \draw[black] (0,0)--(4,0) node[]{};

        \draw[red] (0,0)--(0,0) node[draw, shape = circle, fill = red, minimum size = 0.2cm, inner sep=0pt]{};
        \draw[red] (2,5)--(2,5) node[draw, shape = circle, fill = red, minimum size = 0.2cm, inner sep=0pt]{};
        \draw[red] (4,0)--(4,0) node[draw, shape = circle, fill = red, minimum size = 0.2cm, inner sep=0pt]{};

        \node[left] at (1,2.5) {\normalfont$1$};
        \node[right] at (3,2.5) {\normalfont$1$};
        \node[below] at (2,0) {\normalfont$10^{-10}$};
        
        
        \draw[gray,thick,dashed] (4,2)--(5.8,2) node[]{};
        \node[above] at (4.9,2) {\normalfont$1024$};

        \draw[gray,thick,dashed] (8,2.1)--(9.8,2.1) node[]{};
        \node[above] at (8.9,2.1) {\normalfont$512$};

        \draw[gray,thick,dashed] (11.2,2.2)--(12.6,2.2) node[]{};
        \node[above] at (11.9,2.2) {\normalfont$256$};
        
        
        \draw[black] (6,1.25)--(7,3.75) node[]{};
        \draw[black] (7,3.75)--(8,1.25) node[]{};
        \draw[black] (6,1.25)--(8,1.25) node[]{};

        \draw[red] (6,1.25)--(6,1.25) node[draw, shape = circle, fill = red, minimum size = 0.2cm, inner sep=0pt]{};
        \draw[red] (7,3.75)--(7,3.75) node[draw, shape = circle, fill = red, minimum size = 0.2cm, inner sep=0pt]{};
        \draw[red] (8,1.25)--(8,1.25) node[draw, shape = circle, fill = red, minimum size = 0.2cm, inner sep=0pt]{};
        
        \node[left] at (6.5,2.5) {\normalfont$2^{-1}$};
        \node[right] at (7.5,2.5) {\normalfont$2^{-1}$};
        \node[below] at (7,1.25){\normalfont$2^{-1} 10^{-10}$};


        \draw[black] (10,1.875)--(11,1.875) node[]{};

        \draw[red] (10,1.875)--(10,1.875) node[draw, shape = circle, fill = red, minimum size = 0.2cm, inner sep=0pt]{};
        \draw[red] (11,1.875)--(11,1.875) node[draw, shape = circle, fill = red, minimum size = 0.2cm, inner sep=0pt]{};
        
        \node[below] at (10.5,1.875) {\footnotesize$2^{-2}10^{-10}$};


        \draw[black] (13,2.1875)--(13.5,2.1875) node[]{};

        \draw[red] (13,2.1875)--(13,2.1875) node[draw, shape = circle, fill = red, minimum size = 0.2cm, inner sep=0pt]{};
        \draw[red] (13.5,2.1875)--(13.5,2.1875) node[draw, shape = circle, fill = red, minimum size = 0.2cm, inner sep=0pt]{};

        \node[below] at (13.25,2.1875) {\scriptsize$2^{-3}10^{-10}$};

        \node[] at (14.5,2.5) {\normalsize$\cdots$};
        \node[] at (15,2.5) {\normalsize$z$};
        
    \end{tikzpicture}
    }
\caption{
A drawing of $M$ (top) and $X[2]$ (bottom).
In each triangle $M_i$ the point $a_i$ is at the top, the point $b_i$ is on the bottom left, and the point $c_i$ is on the bottom right.
}
\label{fig:dense-M-and-X2}
\end{figure}
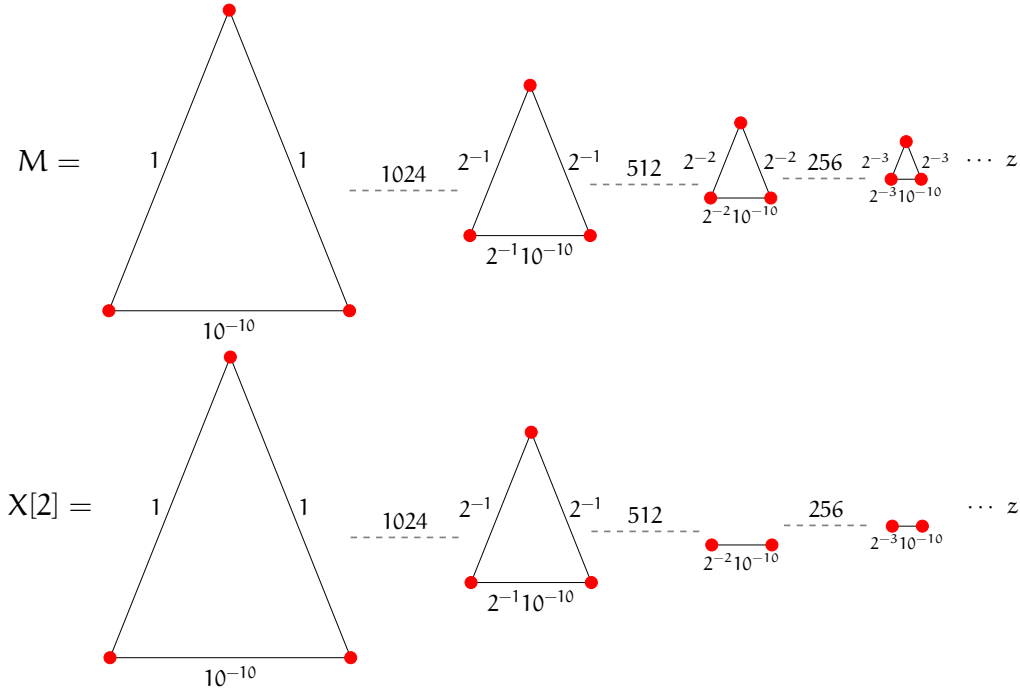

Now let 
\[ X[k] \coloneqq \left(\bigsqcup_{i=0}^{k-1} M_i\right)\sqcup\left(\bigsqcup_{i=k}^{\infty} X_i\right)\sqcup\{z\}.\]
So $X[k]$ has the top point $a_i$ from each $M_i$ removed for all $i\ge k$; see Figure~\ref{fig:dense-M-and-X2}(bottom).
Note that $X[k]$ is $G$-invariant.

We claim $d_\gh(X[k], M) < 2^{-k-1}$.
Indeed, consider the correspondence $R \subseteq X[k] \times M$ that contains $(a_i,a_i),(b_i,b_i),(c_i,c_i)$ for all $0\le i\le k-1$, that contains $(b_i,a_i),(c_i,b_i),(c_i,c_i)$ for all $k\le i<\infty$ (recall Figure~\ref{fig:dGH-Xi-Mi}), and that contains $(z,z)$.
We have $\dis(R)=\dis(R_k)<2^{-k}$, giving $d_\gh(X[k], M) < 2^{-k-1}$.

Since $d_\h(X[k],M)=2^{-k}$, by taking $k$ sufficiently large we can obtain $d_\h(X[k],M)<\delta$ for any $\delta>0$.
We will next show that $d_\gh(X[k]/G,M/G) = 2^{-k-1}$.
This will give
\[d_\gh(X[k]/G, M/G) = 2^{-k-1} > d_\gh(X[k], M),\]
completing the proof of the theorem.

We first show that $d_\gh(X[k]/G,M/G)\geq2^{-k-1}$.
Let $R\subseteq (X[k]/G) \times (M/G)$ be an arbitrary correspondence between $X[k]/G$ and $M/G$, and for the sake of contradiction suppose that $\dis(R)<2^{-k}$.
For each $0\leq i<\infty$, we have $M_i/G=\{[a_i],[b_i]\}$ and $X_i/G=\{[b_i]\}$, where the brackets denote the equivalence class of an element under the action by $G$.
A point in $M/G$ is either in some $M_i/G$ for $0\le i<\infty$ or else is equal to $[z]$.

We first prove that if $(x_0,m)\in R$ with $x_0\in X_0/G$, then $m\in M_0/G$.
We may write $m\in M_i/G$ for some $i$.
Suppose for the sake of contradiction that $i>0$.
Note that $[z]\in X[k]/G$ corresponds to some $m_j\in M_j/G$.
If $i=j$, then $d(m,m_j)\leq1$ so $2^{-k}>|d(x_0,[z])-d(m,m_j)|\geq2^{11}-1$, a contradiction.
Hence, we may write
\[2^{-k}>|d(x_0,[z])-d(m,m_j)|=2^{11}-|2^{11-i}-2^{11-j}|.\]
As a result, $j=0$, yielding $2^{-k}>2^{11-i}$.
Now, notice that $[b_2]\in X_2/G$ corresponds to some $m_s\in M_s/G$.
If $i=s$, then $d(m,m_s)\leq1$ and so $2^{-k}>|d(x_0,[b_2])-d(m,m_s)|\geq (2^{11}-2^9)-1$, which is not the case.
Hence, we may write
\[2^{-k}>|d(x_0,[b_2])-d(m,m_s)|=\left|(2^{11}-2^9)-|2^{11-i}-2^{11-s}|\right|.\]
If $s=0$, we have $2^{-k}>2^9-2^{11-i}\geq2^9-2^{-k}$, which is false.
If $s\geq1$, we have
\[2^{-k} > 2^{11}-2^9-|2^{11-s}-2^{11-i}| = 2^9+2^{10}-|2^{11-s}-2^{11-i}|\geq2^9,\]
which is also false.
Therefore, we may conclude that $i=0$.

More generally, we now prove that if $(x_k,m)\in R$ with $x_k\in X_k/G$, then $m\in M_k/G$.
Consider $[b_0]\in X_0/G$ and $[b_k]\in X_k/G$.
We have $([b_0],m_0)\in R$ for some $m_0\in M_0/G$.
Suppose $([b_k],m_i)\in R$ with $m_i\in M_i/G$.
If $i\neq k$, then
\[
2^{-k} > |d([b_0],[b_k])-d(m_0,m_i)| = |(2^{11}-2^{11-k})-(2^{11}-2^{11-i})| = |2^{11-k}-2^{11-i}|,
\]
and we obtain $2^{-k}>2^{10-\min(k,i)}\geq2^{10-k}$, which is certainly not the case.
Hence, $i=k$.

Now, notice that $[a_k]\in M_k/G$ corresponds to some $x_j\in X_j/G$ for some $j$.
By a similar argument as above we obtain that $j=k$.
Hence $x_j=[b_k]$, yielding
\[2^{-k}>|d([b_k],[b_k])-d([a_k],[b_k])|=d([a_k],[b_k])=2^{-k},\]
a contradiction.
Therefore, $\dis(R)\geq2^{-k}$.
Thus, by Theorem~\ref{thm:distortion_GH}, $d_\gh(X[k]/G,M/G)\geq2^{-k-1}$.2

The other inequality $d_\gh(X[k]/G, M/G) \leq 2^{-k-1}$ follows since the correspondence $S \subseteq X[k]/G \times M/G$ containing $([a_i],[a_i]),([b_i],[b_i])$ for all $0\le i\le k-1$, containing $([b_i],[a_i]),([b_i],[b_i])$ for all $k\le i<\infty$, and containing $([z],[z])$ has distortion $\dis(S)=\diam(M_k/G)=2^{-k}$, giving $d_\gh(X[k], M) \le d_\gh(X_k/G, M_k/G) = \frac{1}{2}\diam(M_k/G) = 2^{-k-1}$.

\end{proof}

\subsection{A generalization in the arbitrarily dense setting}
\label{ssec:results-density-generalization}

We now generalize the construction from the prior section.

Let $M_0$ be an arbitrary nonempty metric space with diameter $0<\eta<\infty$, let $G$ be a group acting properly by isometries on $M_0$, and let $X_0$ be a nonempty $G$-invariant subset of $M_0$.
Define
\[
\delta\coloneqq 2^{1-k}\cdot d_\gh(X_0/G,M_0/G)
\quad\quad\text{ and }\quad\quad
\xi\coloneqq \max\left\{8\delta,\tfrac{4}{3}(\delta+\eta)\right\}.
\]

For each nonnegative integer $i$, define the metric space $M_i$ to be a copy of the metric space $M_0$ with all of the distances scaled by $2^{-i}$.
Let $X_i$ denote the subset of $M_i$ corresponding to the scaled copy of $X_0$.
Let $M_{\infty}\coloneqq\{m_{\infty}\}$ be a metric space consisting of a single point, and let $X_{\infty}\coloneqq M_{\infty}$.
Define
\[
M\coloneqq\bigsqcup_{0\leq i\leq\infty}M_i.
\]
Equip $M$ with a metric $d$ whose restriction to each $M_i$ is just the corresponding metric in $M_i$ and for which $d(m_i,m_j)=\xi\cdot|2^{-i}-2^{-j}|$ for all $m_i\in M_i$ and $m_j\in M_j$ with $i\neq j$.
In particular, $d(m_i,m_\infty)=\xi\cdot 2^{-i}$.
For each nonnegative integer $k$, define
\[
X[k]\coloneqq\left(\bigsqcup_{0\leq i\leq k-1}M_i\right)\sqcup\left(\bigsqcup_{k\leq i\leq\infty}X_i\right).
\]
Clearly we have $d_\h(X[k],M)=2^{-k}\cdot d_\h(X_0,M_0)$ and $d_\h(X[k]/G,M/G)=2^{-k}\cdot d_\h(X_0/G,M_0/G)$.
Also, it is not difficult to see that 
$
d_\gh(X[k],M)\leq2^{-k}\cdot d_\gh(X_0,M_0)
$.

\begin{claim}
\label{claim:arb-dense-gen}
We claim that $d_\gh(X[k]/G,M/G)\geq\frac{\delta}{2}=2^{-k}\cdot d_\gh(X_0/G,M_0/G)$.
\end{claim}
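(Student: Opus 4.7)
The plan is to argue by contradiction: suppose $R\subseteq (X[k]/G)\times (M/G)$ is a correspondence with $\dis(R)<\delta$. The crucial intermediate step is to prove that $R$ is \emph{layer-rigid at level $k$}, meaning every $([x],[m])\in R$ with $[x]\in X_k/G$ satisfies $[m]\in M_k/G$, and conversely every such pair with $[m]\in M_k/G$ satisfies $[x]\in X_k/G$. Once this is established, the restriction $R'\coloneqq R\cap\bigl((X_k/G)\times(M_k/G)\bigr)$ is a correspondence between $X_k/G$ and $M_k/G$ with $\dis(R')\le\dis(R)<\delta$. Since $M_k$ and $X_k$ are, by construction, $2^{-k}$-scaled copies of $M_0$ and $X_0$ carrying the same $G$-action, the quotients $M_k/G$ and $X_k/G$ are isometric to $2^{-k}$-scaled copies of $M_0/G$ and $X_0/G$. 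Therefore $R'$ lifts to a correspondence $R_0$ between $X_0/G$ and $M_0/G$ with $\dis(R_0)=2^k\dis(R')<2^k\delta=2\,d_\gh(X_0/G,M_0/G)$, contradicting Theorem~\ref{thm:distortion_GH}, which says every correspondence between $X_0/G$ and $M_0/G$ has distortion at least $2\,d_\gh(X_0/G,M_0/G)$.

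To prove the layer rigidity, I would follow the pattern used in the lower-bound portion of the proof of Theorem~\ref{thm:GH-Hausdorff_dense}. The key observation is that the distance to the fixed accumulation point $[m_\infty]$ is a layer invariant: $d_{M/G}([y],[m_\infty])=\xi\cdot 2^{-\ell(y)}$, where $\ell(y)=i$ when $y\in M_i$ and $\ell(m_\infty)=\infty$ (with the convention $2^{-\infty}=0$), and the same formula holds inside $X[k]/G$. Using $\xi\ge\tfrac{4}{3}(\delta+\eta)$, one first shows that $[m_\infty]$ must correspond under $R$ to itself: if instead $([m_\infty],[p])\in R$ with $\ell(p)<\infty$, then combining this pair with any $([z],[q])\in R$ for which $[q]\in M_{\ell(p)}/G$ yields the inequality $\xi\cdot 2^{-\ell(z)}<2^{-\ell(p)}\eta+\delta\le\tfrac{3\xi}{4}$, forcing $\ell(z)\ge 1$; a second such comparison using a reference in a different layer then produces an inconsistency. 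Once $[m_\infty]$ is pinned to itself, every $([x],[m])\in R$ satisfies $|\xi\cdot 2^{-\ell(x)}-\xi\cdot 2^{-\ell(m)}|<\delta$, and the bound $\xi\ge 8\delta$, combined with further pairwise reference-point comparisons (using that inter-layer gaps dominate the intra-layer diameters $2^{-\ell}\eta$), implies $\ell(m)=\ell(x)$ whenever $\ell(x)=k$ on either side. That $X_k/G$ corresponds to $M_k/G$ (as opposed to some stray piece) then follows because the layer-$k$ points of $X[k]/G$ are exactly $X_k/G$.

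The main obstacle will be the layer-rigidity step, in particular pinning $[m_\infty]$ to itself under $R$ and propagating the rigidity to level $k$. The two constraints on $\xi$ are each essential here: $\xi\ge\tfrac{4}{3}(\delta+\eta)$ prevents a whole layer from being confused with another layer's neighborhood by ensuring that intra-layer diameters are negligible compared to $\xi$, while $\xi\ge 8\delta$ ensures that the inter-layer gaps at level $k$ are wide enough that a correspondence with distortion below $\delta$ cannot displace points into a neighboring layer. Once rigidity is established, the rescaling argument that descends $R'$ to a correspondence between $X_0/G$ and $M_0/G$ is routine, and the contradiction with Theorem~\ref{thm:distortion_GH} finishes the proof.
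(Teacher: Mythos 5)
Your high-level skeleton coincides with the paper's: assume a correspondence $R$ with $\dis(R)<\delta$, show that $R$ restricts to a correspondence $R'$ between $X_k/G$ and $M_k/G$, and rescale $R'$ by $2^k$ to contradict the fact that every correspondence between $X_0/G$ and $M_0/G$ has distortion at least $2\,d_\gh(X_0/G,M_0/G)$. That restriction-and-rescaling endgame is correct and is exactly how the paper concludes. The gap is in how you propose to obtain layer rigidity. Your intermediate claim that $[m_\infty]$ must correspond under $R$ only to itself cannot be established by the local comparisons you describe, and in fact it is not a usable intermediate step at all: if some correspondence $R$ containing $([m_\infty],[m_\infty])$ had distortion $\delta-\epsilon$ for some $\epsilon>0$, then adjoining the single pair $([m_\infty],[p])$ with $[p]\in M_N/G$ and $\xi\cdot 2^{-N}<\epsilon$ would change every relevant distance by at most $\xi\cdot 2^{-N}$, producing a correspondence of distortion still below $\delta$ that violates pinning. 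So ``pinning'' is logically equivalent to the claim you are trying to prove (it holds only vacuously, once one already knows that no $R$ with $\dis(R)<\delta$ exists), and your proposed two-comparison derivation of it must fail; concretely, when $\ell(p)$ is large, all of the reference-point inequalities you write down are simultaneously satisfiable.

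A second difficulty is that even granting $([m_\infty],[m_\infty])\in R$, the invariant $d(\cdot,[m_\infty])=\xi\cdot 2^{-\ell(\cdot)}$ is too coarse to separate adjacent deep layers: for $\ell(x)=k$ and $\ell(m)=k+1$ it yields only $\xi\cdot 2^{-k-1}<\delta$, and the hypotheses $\xi\ge 8\delta$ and $\xi\ge\tfrac{4}{3}(\delta+\eta)$, combined with $d_\gh(X_0/G,M_0/G)\le\eta/2$, guarantee only $\xi\cdot 2^{-k-1}\ge\tfrac{2}{3}\delta$, which is not a contradiction. The ``further pairwise reference-point comparisons'' you defer are therefore the entire content of the lemma rather than a routine afterthought. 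The paper's route is different: it never pins $[m_\infty]$; instead it first proves, by a contradiction argument using the correspondents of $[m_\infty]$ and of a point of $X_2/G$ as two auxiliary references, that every correspondent of a point of $X_0/G$ lies in $M_0/G$, and then propagates rigidity to level $k$ by comparing $d(x_0,x_k)$ with $d(m_0,m)$. (Even there, the displayed inequality $\xi\cdot 2^{-1-\min(k,i)}\ge\xi/2$ deserves a second look when $\min(k,i)\ge 1$, so the constants at that step need care in any write-up.) To close your gap you would need to replace the $[m_\infty]$-anchoring with an anchor of this kind.
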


\begin{proof}

For the sake of contradiction, suppose there were a correspondence $R\subseteq (X[k]/G) \times (M/G)$ between $X[k]/G$ and $M/G$ such that $\dis(R)<\delta$.
Then $M/G$ is the disjoint union of all $M_i/G$ with $0\leq i\leq\infty$, and $X[k]/G$ is the disjoint union of all $M_0 /G,\ldots, M_{k-1}/G$ and all $X_k/G,X_{k+1}/G,\ldots,X_{\infty}/G$.

We first prove that if $(x_0,m)\in R$ with $x_0\in X_0/G$, then $m\in M_0/G$.
We may write $m\in M_i/G$ for some $i$.
Suppose for the sake of contradiction that $i>0$.
Note that $[m_{\infty}]\in X[k]/G$ corresponds to some $m_j\in M_j/G$.
We have that $i\neq j$ as otherwise
\[
\delta>|d(x_0,m_{\infty})-d(m,m_j)|\geq|\xi-\eta|\geq\tfrac{4}{3}(\delta+\eta)-\eta>\delta.
\]
Since $i\neq j$, we may write
\[
\delta>|d(x_0,m_{\infty})-d(m,m_j)|=\xi\cdot(1-|2^{-i}-2^{-j}|).
\]
If $j>0$, then $\delta>\xi/4$, a contradiction.
Hence, $j=0$, yielding $\delta>\xi\cdot2^{-i}$.
Now, there exists some $x_2\in X_2/G$ that corresponds to some $m_s\in M_s/G$.
We have $i\neq s$ as otherwise
\[
\delta>|d(x_0,x_2)-d(m,m_s)|\geq\left|\tfrac{3}{4}\xi-\eta\right|\geq\delta.
\]
Since $i\neq s$, we may write
\[\delta>|d(x_0,x_2)-d(m,m_s)|=\xi\cdot\left|\tfrac{3}{4}-|2^{-i}-2^{-s}|\right|.\]
Note $s=0$ is impossible, since it would give $\delta>\xi\cdot(\frac{1}{4}-2^{-i})>\frac{\xi}{4}-\delta$ and hence $\delta>\frac{\xi}{8}$, which is false.
Note $s\ge 1$ is also impossible, since it would give $|2^{-i}-2^{-s}|\leq\frac{1}{2}$ and hence $\delta>\frac{\xi}{4}$, which is false.
Therefore, we may conclude that $i=0$.

More generally, we now prove that if $(x_k,m)\in R$ with $x_k\in X_k/G$, then $m\in M_k/G$.
Suppose $m_i \in M_i/G$.
Fix $x_0\in X_0/G$, so $(x_0,m_0)\in R$ for some $m_0\in M_0/G$.
If $i\neq k$, then
\[
\delta > |d(x_0,x_k)-d(m_0,m)| = \xi\cdot|(1-2^{-k})-(1-2^{-i})| = \xi\cdot|2^{-i}-2^{-k}|
\]
and we obtain $\delta>\xi\cdot2^{-1-\min(k,i)}\geq\frac{\xi}{2}$, which is certainly not the case.
Hence, $i=k$.
Through a similar argument, if $x_k\in X_k/G$ satisfies $(x_k,m)\in R$, then $m\in M_k/G$.
As a result, the restriction $R'$ of $R$ to $M_k/G$ is a correspondence between $X_k/G$ and $M_k/G$.
We then have that
\[
\delta=2^{1-k}d_\gh(X_0/G,M_0/G)=2d_\gh(X_k/G,M_k/G)\leq\dis(R')\leq\dis(R)<\delta,
\] a contradiction.
Therefore, any correspondence between $X[k]/G$ and $M/G$ has distortion at least $\delta$, meaning
\[
d_\gh(X[k]/G,M/G)\geq\tfrac{\delta}{2}=2^{-k}\cdot d_\gh(X_0/G,M_0/G).
\]
\end{proof}

\subsection{Arbitrary density with arbitrary constants}
\label{ssec:arb-dense-arb-constants}

We now show that even for arbitrarily dense $G$-invariant subsets $X\subseteq M$, the ratio $\frac{d_\gh(X/G,M/G)}{d_\gh(X,M)}$ can be made either arbitrarily large or arbitrarily small.

Indeed, we show that for any positive integer $n$, there is a compact metric space $M$ with a proper isometric $G$ action such that for any $\delta>0$, there is a $G$-invariant subset $X\subseteq M$ satisfying $d_\h(X,M)<\delta$ and $d_\gh(X/G,M/G) \ge n\cdot d_\gh(X,M)$.
Similarly, for any positive integer $n$, there is a compact metric space $M$ with a proper isometric $G$ action such that for any $\delta>0$, there is a $G$-invariant subset $X\subseteq M$ satisfying $d_\h(X,Z)<\delta$ and $d_\gh(X/G,M/G) \le \frac{1}{n}\cdot d_\gh(X,M)$.

\begin{theorem}
\label{thm:gh-ratio-dense}
For any positive integer $n$, there is a compact metric space $M$ with a proper isometric $G$ action such that for any $\delta>0$, there is a $G$-invariant subset $X\subseteq M$ with $d_\h(X,M)<\delta$ and $d_\gh(X/G,M/G) \le \frac{1}{n}\cdot d_\gh(X,M)$.
\end{theorem}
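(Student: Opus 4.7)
The plan is to combine the finite-space witness of Theorem~\ref{thm:gh-ratio}(a) with the telescoping construction of Section~\ref{ssec:results-density-generalization}. Let $(M_0, G, X_0)$ denote the triple produced by the proof of Theorem~\ref{thm:gh-ratio}(a) with $G = S_{n+1}$; it satisfies $d_\gh(X_0, M_0) \geq n$ and $d_\gh(X_0/G, M_0/G) \leq 1$. Feed these into the construction of Section~\ref{ssec:results-density-generalization}, but with the separation constant $\xi$ fixed once at the outset, chosen large relative to $\eta = \diam(M_0)$ and to $d_\gh(X_0, M_0)$ but independent of $k$ (for instance, $\xi = 8(\eta + d_\gh(X_0, M_0))$ will do). The resulting space $M = \bigsqcup_{0\le i \le \infty} M_i$ is compact since the finite blocks accumulate on the singleton $M_\infty$, the group $G$ acts properly and by isometries on $M$ (each $M_i$ is an invariant block and $m_\infty$ is fixed), and the $G$-invariant subsets $X[k]$ satisfy $d_\h(X[k], M) = 2^{-k} d_\h(X_0, M_0)$; taking $k$ large yields $d_\h(X[k], M) < \delta$ for any prescribed $\delta > 0$.

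What remains is to establish two opposite-direction Gromov--Hausdorff bounds to those proved in Section~\ref{ssec:results-density-generalization}. For the upper bound $d_\gh(X[k]/G, M/G) \leq 2^{-k}\, d_\gh(X_0/G, M_0/G)$, I would build an explicit correspondence $R \subseteq (X[k]/G) \times (M/G)$ blockwise: the identity on $M_i/G$ for $i < k$, a rescaled near-optimal correspondence between $X_i/G$ and $M_i/G$ for $i \geq k$, and the pair $([m_\infty], [m_\infty])$. Since $G$ preserves every block, the between-block distance $\xi\cdot|2^{-i} - 2^{-j}|$ agrees in $M$ and $M/G$, so cross-block pairs contribute nothing to $\dis(R)$; the distortion is thus dominated by the worst within-block piece, which occurs at block $k$ and is at most $2 \cdot 2^{-k}\, d_\gh(X_0/G, M_0/G)$.

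For the matching lower bound $d_\gh(X[k], M) \geq 2^{-k}\, d_\gh(X_0, M_0)$, I would replay the block-detection argument of Claim~\ref{claim:arb-dense-gen} \emph{without} quotients: suppose toward contradiction that a correspondence $R \subseteq X[k] \times M$ has distortion less than $\delta' \coloneqq 2^{1-k}\, d_\gh(X_0, M_0)$; the same kind of case analysis (comparing distances in $R$ to $m_\infty$ and to a reference point in $X_2$, using $\xi$ large relative to $\eta$) forces any $(x,m) \in R$ with $x \in X_i$ to have $m \in M_i$; the restriction of $R$ to block $k$ is then a correspondence between $X_k$ and $M_k$ of distortion less than $\delta' = 2\, d_\gh(X_k, M_k)$, contradicting Theorem~\ref{thm:distortion_GH}. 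Combining the two bounds gives $d_\gh(X[k]/G, M/G) \leq 2^{-k} \leq \tfrac{1}{n}(n \cdot 2^{-k}) \leq \tfrac{1}{n}\, d_\gh(X[k], M)$, as desired. The main obstacle is the non-quotient adaptation of Claim~\ref{claim:arb-dense-gen}: one must verify that the single, $k$-independent choice of $\xi$ is large enough to carry the case analysis through uniformly for all sufficiently large $k$, now using $\eta$ and $d_\gh(X_0, M_0)$ in place of their quotient counterparts.
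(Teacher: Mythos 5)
Your proposal is correct, but it takes a genuinely different route from the proof printed in the paper, and the difference is worth spelling out. The paper's proof seeds the telescoping construction of Section~\ref{ssec:results-density-generalization} with the witness of Theorem~\ref{thm:gh-ratio}(b) and combines Claim~\ref{claim:arb-dense-gen} with the easy bound $d_\gh(X[k],M)\le 2^{-k}d_\gh(X_0,M_0)$; this yields $d_\gh(X[k]/G,M/G)\ge n\cdot d_\gh(X[k],M)$, i.e.\ the \emph{reversed} ratio (the one advertised in the introduction), not the inequality $d_\gh(X/G,M/G)\le\frac{1}{n}d_\gh(X,M)$ that the theorem statement literally asserts. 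You instead prove the statement as written, seeding with Theorem~\ref{thm:gh-ratio}(a), and you correctly identify that this forces you to supply the two mirror-image estimates that Claim~\ref{claim:arb-dense-gen} does not provide: an upper bound on $d_\gh(X[k]/G,M/G)$ via a blockwise correspondence (immediate, since $G$ preserves each block and the cross-block distances $\xi\cdot|2^{-i}-2^{-j}|$ pass to the quotient unchanged), and a lower bound on $d_\gh(X[k],M)$ obtained by rerunning the block-detection case analysis on $X[k]\times M$ with $\delta'=2^{1-k}d_\gh(X_0,M_0)$ in place of $\delta$. That adaptation is legitimate because every distance the case analysis uses (block to $m_\infty$, block to block, within-block bounded by $\eta\cdot 2^{-i}$) is identical with or without the quotient. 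You also handle a point the paper glosses over: the constant $\xi$ in Section~\ref{ssec:results-density-generalization} formally depends on $k$ through $\delta$, which would make $M$ depend on the prescribed density $\delta$ and violate the quantifier order of the theorem; fixing $\xi$ once at the outset (your $\xi=8(\eta+d_\gh(X_0,M_0))$ works, using $d_\gh(X_0,M_0)\le\eta/2$ so that $\delta'\le\eta$ and all the inequalities in the case analysis still close for every $k\ge 0$) repairs this. In short, your route proves the theorem as stated at the cost of redoing Claim~\ref{claim:arb-dense-gen} in the unquotiented setting, whereas the paper reuses the claim verbatim but thereby proves the opposite direction of the ratio.
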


\begin{proof}[Proof of Theorem~\ref{thm:gh-ratio-dense}
]
Let $X_0\subseteq M_0$ from Theorem~\ref{thm:gh-ratio}(b) be the finite $G$-invariant metric spaces satisfying $d_\gh(X_0/G, M_0/G) \geq n\cdot d_\gh(X_0, M_0)$.
We now input these spaces into the construction of Claim~\ref{claim:arb-dense-gen}.
Since $X_0$ and $M$ are finite, this construction yields compact $G$-invariant metric spaces $X \subseteq M$.
Fix $k$ sufficiently large such that $d_\h(X[k],M)=2^{-k}d_\h(X_0,M_0)<\delta$.
We have
\begin{align*}
d_\gh(X[k]/G,M/G)
&\ge \tfrac{1}{2^k}\cdot d_\gh(X_0/G,M_0/G) &&\text{by Claim~\ref{claim:arb-dense-gen}} \\
&\ge \tfrac{n}{2^k}\cdot d_\gh(X_0,M_0) &&\text{by Theorem~\ref{thm:gh-ratio}(b)} \\
&= n\cdot d_\gh(X[k],M).
\end{align*}
Therefore, the spaces $X[k]\subseteq M$ satisfy the necessary criteria.
\end{proof}


\begin{remark}
When we didn't insist that the subset $X\subseteq M$ was arbitrarily dense in $M$, we were able to produce the bounds $\frac{d_\gh(X/G,M/G)}{d_\gh(X,M)}\ge n$ or $\frac{d_\gh(X/G,M/G)}{d_\gh(X,M)}\le n$ in Theorems~\ref{thm:gh-ratio} using a \emph{finite} subset $M$.
Now, when want to allow $X\subseteq M$ to be arbitrarily dense, in Theorem~\ref{thm:gh-ratio-dense} we use a more complicated space $M$ that is no longer finite, but that is still compact.
\end{remark}

\section{Addressing a question from~\cite{HvsGH}}
\label{sec:addressing-a-question}

In this section, we leave behind group actions in order to answer a question inspired from~\cite{HvsGH}.
Theorem~1(a) of~\cite{HvsGH} shows that if $X$ is a sufficiently dense sample from a compact manifold $M$, then the Gromov--Hausdorff distance $d_\gh(X,M)$ is bounded from below by a reasonable constant times the Hausdorff distance $d_\h(X,M)$.
Theorem~5 of~\cite{HvsGH} constructs an example showing that if $X\subseteq Z$ is arbitrary (i.e., if $Z$ is not a manifold and if $X$ is not sufficiently dense), then the bound may not hold:
\emph{For any $\varepsilon>0$, there exists a compact metric space $Z$ and a subset $X\subseteq Z$ with
\[d_\gh(X,Z) < \varepsilon \cdot d_\h(X,Z).\]}
So, at least one of the manifold assumption or the density assumption (or variants thereof) is needed in general.

In this section, we show that the density assumption alone does not suffice in general.
Indeed, we show how to mimic the construction from Section~\ref{sec:results-density}, replacing each ``tripod'' $M_i$ with a scaled copy of the construction from~\cite[Theorem~5]{HvsGH}, in order to prove the following stronger result.

\begin{theorem}
\label{thm:manifold-assumtion-necessary}
For any $\varepsilon>0$, there exists a compact metric space $Z$ such that for any $\delta>0$, there is a subset $X\subseteq Z$ with $d_\h(X,Z)<\delta$ with
\[d_\gh(X,Z) < \varepsilon \cdot d_\h(X,Z).\]
\end{theorem}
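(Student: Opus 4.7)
The plan is to mimic the accumulating-scaled-copies construction of Section~\ref{ssec:results-density-generalization}, with the ``tripods'' replaced by the compact metric space produced by~\cite[Theorem~5]{HvsGH}, and to drop the group action entirely. First I would invoke~\cite[Theorem~5]{HvsGH} to obtain a compact metric space $Z_0$ and a subset $X_0\subseteq Z_0$ with $d_\gh(X_0,Z_0)<\varepsilon\cdot d_\h(X_0,Z_0)$. Let $\eta\coloneqq\diam(Z_0)$, and for each nonnegative integer $i$ let $Z_i$ be a copy of $Z_0$ with the metric scaled by $2^{-i}$ and let $X_i\subseteq Z_i$ be the corresponding scaled copy of $X_0$. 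Adjoin a point $z_\infty$ and define
\[
Z\coloneqq\{z_\infty\}\sqcup\bigsqcup_{i=0}^{\infty}Z_i,
\]
equipping $Z$ with the metric whose restriction to each $Z_i$ is the scaled metric and for which $d(w,w')=\xi\cdot|2^{-i}-2^{-j}|$ whenever $w\in Z_i$ and $w'\in Z_j$ with $i\neq j$ (with $d(w,z_\infty)=\xi\cdot 2^{-i}$), where $\xi$ is chosen sufficiently large in terms of $\eta$ (e.g.\ $\xi>4\eta$). A standard totally-bounded-plus-complete argument will then show $Z$ is compact.

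Next I would define, for each nonnegative integer $k$,
\[
X[k]\coloneqq\{z_\infty\}\sqcup\left(\bigsqcup_{i=0}^{k-1}Z_i\right)\sqcup\left(\bigsqcup_{i=k}^{\infty}X_i\right),
\]
and verify two estimates. For the Hausdorff distance, the choice of $\xi$ forces the closest point of $X[k]$ to any $w\in Z_i\setminus X_i$ with $i\geq k$ to lie in $X_i$ itself (inter-level shortcuts are ruled out because $\xi\cdot 2^{-i-1}>2^{-i}\eta$), giving $d_\h(X[k],Z)=2^{-k}\cdot d_\h(X_0,Z_0)$. For the Gromov--Hausdorff distance, I would build an explicit correspondence $R\subseteq X[k]\times Z$ by taking the identity on each $Z_i$ with $i<k$, taking $(z_\infty,z_\infty)$, and for each $i\geq k$ taking a near-optimal correspondence $R_i\subseteq X_i\times Z_i$ whose distortion is arbitrarily close to $2\cdot 2^{-i}d_\gh(X_0,Z_0)$.

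The crucial observation — and the place I expect to spend most of the effort — is that the inter-level distance $\xi\cdot|2^{-i}-2^{-j}|$ depends only on the level indices $i,j$ and not on the chosen representatives. Consequently, for any two pairs in $R$ sitting in distinct levels the distances in $X[k]$ and $Z$ coincide exactly, contributing zero to $\dis(R)$, so the distortion is entirely controlled by the intra-level contributions, yielding $\dis(R)\leq 2\cdot 2^{-k}\cdot d_\gh(X_0,Z_0)$ and hence $d_\gh(X[k],Z)\leq 2^{-k}\cdot d_\gh(X_0,Z_0)$.

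Combining the two estimates gives
\[
d_\gh(X[k],Z)\leq 2^{-k}\cdot d_\gh(X_0,Z_0)<2^{-k}\varepsilon\cdot d_\h(X_0,Z_0)=\varepsilon\cdot d_\h(X[k],Z),
\]
so for any $\delta>0$ it suffices to take $k$ large enough that $2^{-k}d_\h(X_0,Z_0)<\delta$, which also yields $d_\h(X[k],Z)<\delta$. The main technical hurdle is the careful bookkeeping for the correspondence $R$ — in particular, confirming that pairings between a level-$i<k$ identity pair and a level-$j\geq k$ correspondence pair still contribute zero distortion, and that the choice of $\xi$ suffices uniformly in $k$ to prevent the Hausdorff distance from being witnessed by a cross-level pair rather than an intra-level one.
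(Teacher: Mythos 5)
Your proposal is correct and follows essentially the same route as the paper: one glues together scaled copies of the example from~\cite[Theorem~5]{HvsGH} along a sequence accumulating at a single point, and for each $k$ replaces the copies of index $\ge k$ by their subsets $X_i$. The only notable differences are that you choose the inter-level gap $\xi$ relative to $\diam(Z_0)$ and bound $d_\gh(X[k],Z)$ via an explicit level-preserving correspondence, which is if anything slightly more careful than the paper's fixed gaps $2^{10-i}$ and its re-embedding by the rotation $f$.
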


As we have mentioned, this result shows that the manifold assumption (or a variant thereof) in~\cite[Theorem~1(a)]{HvsGH} is needed, even in the presence of arbitrarily dense subsets.

\begin{proof}
We begin by giving the construction that proves~\cite[Theorem~1(a)]{HvsGH}.
Choose the integer $n$ large enough so that $\frac{1}{\sqrt{n}}<\varepsilon$.
Let $\R^n$ be Euclidean space with the standard $\ell_2$-metric.
We denote the standard basis vectors in $\R^n$ by $e_1, \dots, e_n$.
For $j\in\{1,\ldots ,n\}$, let $x_j = \sum_{i=1}^j i\cdot e_i$.
Let $Z_0 = \{x_1, \dots, x_n\} \subseteq \R^n$, and let $X_0 = \{x_1,\dots, x_{n-1}\} \subseteq Z_0$.
We have $d_\h(X_0,Z_0) = \|x_n - x_{n-1}\| = n$.
To bound the Gromov--Hausdorff distance between $X_0$ and $Z_0$, consider the linear map $f \colon \R^n \to \R^n$ defined by $f(e_i) = e_{i+1}$ for $i \in \{1,\dots, n-1\}$ and $f(e_n) = e_1$.
For $j \in \{2,\dots,n\}$ we have $\|x_j - f(x_{j-1})\| = \|\sum_{i=1}^j e_i\| = \sqrt{j}$.
Further, $\|x_1 - f(x_1)\| = \|e_1 - e_2\| = \sqrt{2}$.
We get that $d_\h(f(X_0),Z_0) = \sqrt{n}$, and since $f$ is an isometry of $\R^n$, this proves that ${d_\gh(X_0,Z_0) \le \sqrt{n}}$.
Thus
\[\frac{d_\gh(X_0,Z_0)}{d_\h(X_0,Z_0)}\le\frac{\sqrt{n}}{n}=\frac{1}{\sqrt{n}}<\varepsilon.\]

We are now prepared to prove Theorem~\ref{thm:manifold-assumtion-necessary}.
For $i\ge 0$, define the metric space $Z_i\subseteq \R^n$ to be a scaled copy of $Z_0$ in which all distances are multiplied by $2^{-i}$.
Let
\[ Z=\{z\} \sqcup \bigsqcup_{i=0}^{\infty} Z_i.\]
As in the proof of Theorem~\ref{thm:GH-Hausdorff_dense}, we set the distance between any two points in $Z_i$ and $Z_j$ for $i<j$ to be $\sum_{k=i}^{j-1}2^{10-k} = 2^{11-i} - 2^{11-j}$; note this is \emph{not} their distance in Euclidean space.
Also,we set the distance between $z$ and any point in $Z_i$ to be $\sum_{k=i}^{\infty}2^{10-k} = 2^{11-i}$.
Including the point $z$ ensures that the metric space $Z$ is compact.

Given $\delta>0$ (and $n$ such that again $\frac{1}{\sqrt{n}}<\varepsilon$), choose $k$ such that $2^{-k}\cdot n < \delta$.
For $i\ge 0$, define the submetric space $X_i\subseteq Z_i \subseteq \R^n$ to be a scaled copy of $X_0\subseteq Z_0$ in which all distances are multiplied by $2^{-i}$.
Now let 
\[ X[k] \coloneqq \{z\}\sqcup \bigsqcup_{i=0}^{k-1} Z_i\sqcup\bigsqcup_{i=k}^{\infty} X_i.\]
Note we have the subset relationship $X[k]\subseteq Z$.
Observe that the Hausdorff distance  between $X[k]$ and $Z$ satisfies
$d_\h(X[k],Z) = d_\h(X_k,Z_k) = 2^{-k}\cdot n < \delta$, which is the required density assumption on the subset $X[k] \subseteq Z$.
Furthermore, by considering the identity function $Z_i \to Z_i$ for $0\le i\le k-1$, and by considering a copy of the rotation $f\colon \R^n \to \R^n$ for each $k \le i$, we obtain the following bound on the Gromov--Hausdorff distance between $X[k]$ and $Z$:
\[d_\gh(X[k],Z)\le d_\gh(X_k,Z_k)\le d_\h(f(X_k),Z_k)=2^{-k}\cdot\sqrt{n}.\]
Hence
\[\frac{d_\gh(X[k],Z)}{d_\h(X[k],Z)}\le\frac{2^{-k}\sqrt{n}}{2^{-k}\cdot n}=\frac{1}{\sqrt{n}}<\varepsilon,\]
as required.
\end{proof}

\section{Conclusion and open questions}
\label{sec:conclusion}

We have studied the Hausdorff and Gromov--Hausdorff distances between two metric spaces and quotients thereof by a group action.
In particular, we found that the Hausdorff distances between such spaces have well-behaved relationships.
That is, for a metric space $M$ equipped with a proper isometric $G$ action and for $X\subseteq M$ G-invariant, we have $d_\h(X/G,M/G)=d_\h(X,M)$.
By contrast, the Gromov--Hausdorff distances of the same spaces do not generally preserve these relationships.
Indeed, under the same conditions, there is no general relationship between $d_\gh(X/G,M/G)$ and $d_\gh(X,M)$, even when $M$ compact and $X\subseteq M$ is arbitrarily dense.

Our research process has touched upon several open questions, which we list below:

\begin{enumerate}

\item
If $C$ is a convex region in the plane, with boundary $\partial C$, how does $d_\gh(\partial C,C)$ compare to $d_\h(\partial C,C)$?
This question is interesting already in the case when $C$ is a convex polygon.

\item
More generally, if $C$ is a convex region in $\R^n$, how does $d_\gh(\partial C,C)$ compare to $d_\h(\partial C,C)$?
This question is interesting already in the case of platonic solids.

\item
What is the Gromov--Hausdorff distance $d_\gh(B^{m+1},S^m)$ between the $(m+1)$-dimensional Euclidean ball $B^{m+1}=\{x\in \R^{m+1}~|~\|x\|\le 1\}$ and its $m$-dimensional boundary sphere $S^m=\{x\in \R^{m+1}~|~\|x\|\le 1\}$, when both are equipped with the Euclidean metric?
We refer the reader to~\cite{312046} for some discussion of the case $m=1$, which is already hard.

\item
Let $B^n=\{(x_1,\ldots,x_n)~|~x_1^2+\ldots+x_n^2 \le 1\}$ be the $n$-dimensional unit ball, equipped with the Euclidean metric.
What is the Gromov--Hausdorff distance between $B^n$ and $B^m$ for $n\neq m$?

\item
Let $M_g$ and $M_{g'}$ be two orientable Riemannian surfaces of genus $g$.
If the curvature of $M_g$ and $M_{g'}$ are both bounded by $\kappa$, then can we prove that $d_\gh(M_g,M_{g'})$ is lower bounded by some function of $\kappa$ and $|g-g'|$?

This question is already interesting in the case when $g=0$ and $g'=1$, i.e.\ when $M_0$ is the sphere and when $M_1$ is the flat torus (the product of two circles), both scaled to have the same diameter.
A first lower bound could be obtained from the stability of persistent homology~\cite{chazal2009gromov}, but we expect better bounds to be possible.

A classic example showing that Gromov--Hausdorff convergence of manifolds does not preserve topology is the case of a torus with a smaller and smaller handle converging to a sphere.
However, note that in this convergent sequence, the curvature in the shrinking handle is blowing up without bound.
That is why we have included a curvature bound in the statement of this question.

\end{enumerate}

\bibliographystyle{plain}
\bibliography{GromovHausdorffDistancesBetweenQuotientMetricSpaces}

\end{document}